\documentclass[10 pt, reqno]{amsart}

\usepackage{mathtools}

\usepackage[dvipsnames]{xcolor}
\usepackage[a4paper,margin=1in]{geometry}
\usepackage{latexsym, mathrsfs, enumerate, fullpage,graphicx,amsmath,amsfonts,amssymb,amsthm,tikz-cd}

\usepackage{biblatex}
\usepackage{hyperref}
\newtheorem{theorem}{Theorem}[section]

\newtheorem{lemma}[theorem]{Lemma}

\newtheorem{corollary}[theorem]{Corollary}

\newtheorem{defn}{Definition}[section]

\begin{document}
\title{A Cheng-type Eigenvalue-Comparison Theorem for the Hodge Laplacian }
\author[A. Bhattacharya and S. Maity]{Anusha Bhattacharya and Soma Maity}

\address{Department of Mathematical Sciences, Indian Institute of Science Education and Research Mohali, \newline Sector 81, SAS Nagar, Punjab- 140306, India.}
\email{ph21059@iisermohali.ac.in}

\address{Department of Mathematical Sciences, Indian Institute of Science Education and Research Mohali, \newline Sector 81, SAS Nagar, Punjab- 140306, India.}
\email{somamaity@iisermohali.ac.in}

\begin{abstract}
We consider the class of closed Riemannian $n$-manifolds with Ricci curvature and injectivity radius bounded below by uniform constants, and an upper bound on the diameter. We establish a uniform upper bound for the eigenvalues of the Hodge Laplacian acting on differential forms on Riemannian manifolds in this class, similar to the classical eigenvalue comparison theorem proved by Cheng for the Laplace-Beltrami operator acting on smooth functions. This extends earlier work of  Dodziuk and Lott, which required sectional curvature bounds in addition to bounds on other geometric quantities. As an application, we obtain uniform eigenvalue estimates for the connection Laplacian acting on $1$-forms.

\end{abstract}

\maketitle
\section{Introduction} Consider a closed Riemannian manifold $(M,g)$. Let $\lambda_k$  denote the $k$-th positive eigenvalue of the Laplacian (non-negative) acting on smooth functions on $M$, counted with multiplicity. In 1975, S.-Y. Cheng proved the following upper bound for $\lambda_k$.
\begin{theorem}\cite{Cheng} Suppose $M$ is a closed Riemannian manifold with dimension $n$ and diameter $D$. If Ricci curvature of $M$ is greater than or equal to $(n-1)\xi$ then
$$\lambda_k(M)\leq \lambda_0^D(B_\xi(\frac{D}{2k}))$$
where $\lambda_0^D(B_{\xi}\left(r)\right)$ denotes the first positive Dirichlet eigenvalue of the Laplacian on a ball of radius $r$ in the space of constant curvature $\xi$ and dimension $n$. 
\end{theorem}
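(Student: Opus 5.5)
The plan is the classical one: build $k+1$ test functions with pairwise disjoint supports and apply the min--max principle. Since $M$ has diameter $D$, pick two points $p,q$ with $d(p,q)=D$ and a minimizing geodesic $\gamma$ from $p$ to $q$. Place $k+1$ points $x_0=p, x_1,\dots,x_k=q$ along $\gamma$ with $d(x_i,x_{i+1})=D/k$. Because $\gamma$ is minimizing, $d(x_i,x_j)=|i-j|D/k$. Hence the open balls $B(x_i,\tfrac{D}{2k})$ are pairwise disjoint.

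On each ball $B_i=B(x_i,r)$, $r=D/(2k)$, the key comparison is that the first Dirichlet eigenvalue satisfies
\begin{equation*}
\lambda_0^D(B_i)\le \lambda_0^D(B_\xi(r)).
\end{equation*}
To prove it, let $\phi(t)$ be the radial first Dirichlet eigenfunction on $B_\xi(r)$. It is positive, decreasing on $[0,r)$, and satisfies $\phi''+(n-1)\frac{\operatorname{sn}_\xi'}{\operatorname{sn}_\xi}\phi' = -\lambda\phi$ with $\lambda=\lambda_0^D(B_\xi(r))$. Transplant it as $u_i(x)=\phi(d(x_i,x))$, a Lipschitz function supported in $\overline{B_i}$. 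By Laplacian comparison (Ricci $\ge (n-1)\xi$), we have $\Delta d \le (n-1)\operatorname{sn}_\xi'/\operatorname{sn}_\xi$ in the barrier/distribution sense. Since $\phi'\le 0$, this gives $-\Delta u_i \le \lambda u_i$ weakly. Multiplying by $u_i$ and integrating yields
\begin{equation*}
\int|\nabla u_i|^2\le \lambda\int u_i^2.
\end{equation*}

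Handling the cut locus is the main obstacle. The route I would try first is to use that the distribution Laplacian of $d$ is bounded above by its smooth part away from the cut locus, because the singular part is a nonpositive measure on the cut locus. Paired with $-\phi'\,u_i\ge0$, the singular part only helps. An alternative is to integrate in geodesic polar coordinates inside the segment domain, using the Bishop comparison $\frac{d}{dt}\log\frac{J(t,\theta)}{\operatorname{sn}_\xi^{n-1}(t)}\le0$. Cut-point truncation is then harmless, since the boundary terms come with favorable sign because $\phi'\le0$.

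Finally, the functions $u_0,\dots,u_k$ have disjoint supports, so any linear combination $u=\sum c_iu_i$ satisfies
\begin{equation*}
\int|\nabla u|^2\le\lambda\int u^2.
\end{equation*}
This gives a $(k+1)$-dimensional subspace of $H^1(M)$ on which the Rayleigh quotient is at most $\lambda$. The first eigenvalue $\lambda_0=0$ (constants) is excluded from the count $\lambda_1\le\lambda_2\le\cdots$ of positive eigenvalues. By min--max, the $(k+1)$-st eigenvalue counted from $0$, namely $\lambda_k$, is at most $\lambda_0^D(B_\xi(D/2k))$. This proves the statement.
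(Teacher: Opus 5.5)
Your argument is correct, and it is essentially Cheng's original proof. The paper cites this result rather than proving it, but it follows the same template for Theorem~\ref{thm:main_theorem}: a minimizing geodesic of length $D$, $k+1$ equally spaced points, pairwise disjoint balls of radius $D/2k$, the local comparison $\lambda_0^D(B(x,r))\le\lambda_0^D(B_\xi(r))$, and min--max over disjointly supported test functions. Your polar-coordinate handling of the cut locus, where truncation at the cut distance leaves a boundary term of favorable sign because $\phi\ge 0\ge\phi'$, is the standard way to make that local comparison rigorous.
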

  For  $1\le p\le n$, the Hodge Laplacian $\Delta$ acting on differential forms of degree $p$ is defined as 
\[\Delta=dd^*+d^*d\] where $d$ is the exterior derivative and  $d^*=(-1)^{n(p+1)+1}*d*$, where $*$ is the Hodge operator. Let $\lambda_{k,p}(M)$ denote the $k$-th eigenvalue of $\Delta$ acting on differential forms of degree $p$.  Upper bounds for  $\lambda_{k,p}(M)$ are obtained by Dodziuk in \cite{ dodziuk1982} and Lott in \cite{LottJ} under sectional curvature bounds extending Cheng's result to differential forms. It is natural to ask whether similar bounds can be established under weaker curvature conditions. In this paper, we obtain a Cheng-type upper bound for manifolds with  Ricci curvature bounded below. 
\begin{theorem} \label{thm:main_theorem}
Let $(M,g)$ be a closed Riemannian $n$-manifold with diameter $D$ such that the Ricci curvature satisfies $\mathrm{Ric}_g \ge (n-1)\xi .$  If the harmonic radius of $(M,g)$ is greater than $r_H$ then 
\[\lambda_{k,p}(M)\le 2^{2p+1} \lambda_0^D\left(B_\xi\left(\frac{D}{2k}\right)\right), \quad \forall k\ge \frac{D}{2r_H}\]
and \[\lambda_{k,p}(M)\le 2^{2p+1} \lambda_0^D\left(B_\xi(r_H)\right),\qquad \forall k\le \frac{D}{2r_H}\]
where $\lambda_0^D(B_{\xi}\left(r)\right)$ denotes the first positive Dirichlet eigenvalue of the Laplacian acting on functions on a ball of radius $r$ in the space of constant curvature $\xi$ and dimension $n.$
\end{theorem}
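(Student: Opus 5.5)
We will use the min-max characterization of $\lambda_{k,p}$. It suffices to produce $k+1$ nonzero smooth $p$-forms $\omega_0,\dots,\omega_k$ with pairwise disjoint supports, each with Rayleigh quotient
\[
\frac{\|d\omega_i\|^2+\|d^*\omega_i\|^2}{\|\omega_i\|^2}\le 2^{2p+1}\Lambda ,
\]
where $\Lambda$ is the relevant Dirichlet eigenvalue. Disjointness makes the forms orthogonal, and it also makes the quadratic form diagonal on their span, so the quotient bound passes to every linear combination. (The count $k$ versus $k+1$ and the harmonic forms at $\lambda=0$ are harmless: an eigenvalue bound on a $(k+1)$-dimensional space bounds the $k$-th eigenvalue counted from $0$.)

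\textbf{Choice of balls.} Set $r=\frac{D}{2k}$ when $k\ge \frac{D}{2r_H}$, so that $r\le r_H$. When $k\le\frac{D}{2r_H}$, set $r=r_H$, noting $r_H\le \frac{D}{2k}$. Pick points $x_0,\dots,x_k$ along a minimizing geodesic of length $D$, spaced so that the balls $B(x_i,r)$ are disjoint; the spacing $D/k\ge 2r$ gives exactly $k+1$ such points, as in Cheng's argument. On each ball we have Cheng's function: $f_i=\phi(d(x_i,\cdot))$, where $\phi$ is the radial first Dirichlet eigenfunction on $B_\xi(r)$. Laplacian comparison under $\mathrm{Ric}\ge(n-1)\xi$ gives $\int|\nabla f_i|^2\le \lambda_0^D(B_\xi(r))\int f_i^2$.

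\textbf{Building the forms.} The harmonic radius bound gives harmonic coordinates $x^1,\dots,x^n$ on $B(x_i,r_H)\supset B(x_i,r)$. In these coordinates the metric satisfies $\tfrac12\delta\le g\le 2\delta$ after normalizing the harmonic radius definition with constant $2$, and $\Delta x^j=0$. Define
\[
\omega_i=f_i\,dx^{1}\wedge\cdots\wedge dx^{p}.
\]
Since $d(dx^J)=0$, we get $d\omega_i=df_i\wedge dx^J$. Also $d^*\omega_i=-\iota_{\nabla f_i}dx^J+f_i\,d^*dx^J$. To estimate the Rayleigh quotient we need:
\begin{enumerate}
\item pointwise, $|df\wedge dx^J|^2+|\iota_{\nabla f}dx^J|^2\le |df|^2|dx^J|^2$, an identity in the orthonormal case and with comparability constants in general;
\item $|dx^J|^2$ is pinched between $2^{-p}$ and $2^{p}$, so the ratio $\sup|dx^J|^2/\inf|dx^J|^2\le 2^{2p}$.
\end{enumerate}
Together these would give the quotient $\le 2^{2p}\int|df|^2/\int f^2$ up to the cross term, and the extra factor $2$ absorbs $(a+b)^2\le 2a^2+2b^2$.

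\textbf{Main obstacle.} The term $f\,d^*dx^J$ is the hard part. For $p=1$ it vanishes because $d^*dx^j=-\Delta x^j=0$, so the coordinates are harmonic. For $p\ge2$, $d^*(dx^1\wedge dx^2)$ involves $\nabla_{\nabla x^1}\nabla x^2-\nabla_{\nabla x^2}\nabla x^1=[\nabla x^1,\nabla x^2]$, which need not vanish, and no $C^1$ control of $g$ is assumed. So I would first try to avoid $d^*$ entirely. Take $\omega_i=d(f_i x^1)\wedge dx^2\wedge\cdots$ modified, or better, use the Hodge decomposition, restricting the min-max to the spaces of exact and coexact forms separately. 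Spectral $p$-eigenvalues come from exact $p$-forms or from exact $(p+1)$-forms via $*$ duality, so it suffices to test with closed forms $\eta_i=d(f_i\,x^1dx^2\wedge\cdots\wedge dx^p)$. Their quotient $\|d^*\eta\|^2/\|\eta\|^2$ can then be bounded dually by $\|\eta\|^2/\|\alpha\|^2$ with $\eta=d\alpha$, which only involves first derivatives of $f$ and $C^0$ bounds on $g$. If this duality trick fails, fall back on the harmonic-radius $C^{1,\alpha}$ control, and absorb $|d^*dx^J|\lesssim |\partial g|$ by shrinking $r_H$. Either way, the $C^0$ pinching of harmonic coordinates is the source of the constant $2^{2p+1}$.
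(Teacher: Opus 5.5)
Your architecture is the paper's own route. You place $k+1$ disjoint balls of radius $D/(2k)$ (or $r_H\le D/(2k)$ in the second range, which you justify more explicitly than the paper) along a minimizing geodesic of length $D$. On each you use Cheng's radial function and the test form $f\,dx^1\wedge\cdots\wedge dx^p$ in harmonic coordinates, then apply min--max over disjointly supported forms; this is Lemma 3.1 combined with Lemma 2.3 and Corollary 2.4. Your pointwise bookkeeping is fine; in fact $|\theta\wedge\phi|^2+|\iota_{\theta^\sharp}\phi|^2=|\theta|^2|\phi|^2$ holds exactly for any metric. For $p=1$, where $d^*dx^j=\Delta x^j=0$, your argument is complete and even gives the constant $4$. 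Since the test forms live in balls, the same computation with $f\,{*}dx^1$ and $f\,dV$ also settles $p=n-1$ and $p=n$.

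\textbf{The gap.} What remains open is the term $f\,d^*(dx^1\wedge\cdots\wedge dx^p)$ for $2\le p\le n-2$, so only $n\ge 4$ is affected. The paper disposes of this term by asserting $d^*\omega_0=0$ because $g^{kl}\Gamma^j_{kl}=0$. Your suspicion is correct: the coordinate expression of $d^*$ also contains the terms $g^{kl}\Gamma^j_{l i_m}\alpha_{k i_1\cdots j\cdots i_{p-1}}$, coming from differentiating the remaining indices, and these do not cancel. For instance, take $S^2\times S^2$ with (rescaled) stereographic coordinates $(y^1,y^2)$ and $(y^3,y^4)$ on the two factors; these are harmonic for the product metric, yet $d^*(dy^1\wedge dy^2)=\pm{*_1}\,d\bigl((\det g_1)^{-1/2}\bigr)\neq 0$. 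So the paper does not supply the missing step, and your proposal does not either.

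\textbf{The duality trick.} Neither of your repairs works as stated, and this one runs the wrong way. For $\eta=d\alpha$, $\|\eta\|^2=\langle\alpha,d^*\eta\rangle\le\|\alpha\|\,\|d^*\eta\|$ gives $\|d^*\eta\|^2/\|\eta\|^2\ge\|\eta\|^2/\|\alpha\|^2$. That is a lower bound, whereas min--max needs an upper bound. Equivalently, on exact forms the relevant quotient is $\|d\alpha\|^2/\|\alpha-P\alpha\|^2$ with $P$ the projection onto closed forms, and controlling $P\alpha$ is precisely the difficulty. Moreover, $d^*$ of your $\eta_i$ contains the same metric-derivative terms.

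\textbf{The fallback.} A lower Ricci bound plus an injectivity radius bound give only $C^0$ pinching and the $W^{1,q}$ bound $r_H^{1-n/q}\|\partial g\|_{L^q}\le 1$ of Theorem 2.1, not $C^{1,\alpha}$ control.
\begin{itemize}
\item Via H\"older, the leftover term $2\int f^2|d^*dx^J|^2\big/\int f^2|dx^J|^2$ on a ball of radius $r\le r_H$ is bounded only by $C\,r^{-2}(r/r_H)^{2-2n/q}$. When $r$ is comparable to $r_H$, this is of the same order $r^{-2}$ as $\lambda_0^D(B_\xi(r))$ itself.
\item The factor $2$ in $2^{2p+1}$ cannot absorb it. The splitting $(a+b)^2\le 2a^2+2b^2$ already spends that factor on the $\iota_{\nabla f}dx^J$ term.
\item Making the term negligible forces $r\le\delta r_H$ with $\delta$ small. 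You would then obtain the constant $2^{2p+1}$ only for $k\ge D/(2\delta r_H)$, with $\lambda_0^D(B_\xi(\delta r_H))$ in place of $\lambda_0^D(B_\xi(r_H))$ below that threshold. That is a weaker statement than the theorem.
\end{itemize}
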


For the definition of harmonic radius, we refer to section \ref{prelim}. In \cite{lott2022}, Lott mentioned the existence of uniform bounds on eigenvalues under the assumptions mentioned in Theorem \ref{thm:main_theorem}. We remark that $\lambda_{1,p}(M)$ satisfies the upper bound given in the second inequality of the above theorem. Using the estimates for $\lambda_0^D(B_{\xi}\left(r)\right)$ given in \cite{Cheng}, we obtain explicit bounds for $\lambda_{k,p}(M)$ in terms of $n,D,p,\xi$ and $k.$

In a similar direction, Savo derived upper bounds for the spectral gap of the Hodge Laplacian on the hyperbolic space and established bounds for the first eigenvalue of the Hodge Laplacian on convex Euclidean domains with absolute and relative boundary conditions in \cite{savo09} and \cite{savo2002}, respectively. In a subsequent work \cite{guerinisavo2004}, Guerini and Savo derived an estimate of the first eigenvalue of $\Delta$ for manifolds whose boundaries have some degree of convexity.   In \cite{takahashi2001}, \cite{takahashi2003}, Takahashi derived bounds on the spectral gap between the first eigenvalues of the Laplacian on functions and on $1$-forms.

\par \textbf{Idea of the proof:}  Our approach is based on a discretization of the manifold by small geodesic balls.  Several works, including \cite{ burago, belkin, Tatiana}, and more recently \cite{MaityBhattacharya2026} have demonstrated that discretizing a Riemannian manifold provides an effective method for studying the spectrum of the Laplacian.   Since the only curvature assumption imposed is a lower bound on the Ricci curvature, our methods mostly rely on the existence of harmonic coordinate charts and a uniform harmonic radius. We consider disjoint balls of radius smaller than the harmonic radius centered at the points of the discretization of the manifold and use bounds on harmonic coordinates in  \cite{CheegerAnderson} to derive an upper bound for the Dirichlet eigenvalues on the balls. In Theorem \ref{thm:main_theorem}, we then establish an upper bound for the eigenvalues on manifolds of diameter $D$ by considering a geodesic of maximal length, partitioning it into segments of length controlled by the harmonic radius, and discretizing the manifold by extending the balls arising from this partition. We derive two immediate corollaries of Theorem \ref{thm:main_theorem}  according to the sign of the Ricci curvature lower bound and obtain an upper bound in terms of the volume in Theorem \ref{thm:volume}. In Corollary \ref{cor:connectionLap}, using the B\"ochner (Weitzenb\"ock) formula for non-negative Ricci curvature, we derive bounds for the first eigenvalue of the connection Laplacians on $1$-forms. 
On non-compact manifolds, we derive an upper bound for the spectral gap of the Hodge-Laplacian by considering an exhaustion of the manifold using compact balls. We apply Theorem \ref{thm:main_theorem} to these balls to derive an upper bound of the first Dirichlet eigenvalue and take their limit to obtain an upper bound on the spectral gap.

\section{Preliminary results} \label{prelim}
In this section, we introduce the definitions and results we will use in this paper. For fixed $n\ge 2$, $\xi\ge 0$, and $r_0>0$, we define
\[
\mathcal{M}(n,\xi,r_0)
=\{(M,g)\;\text{is a complete Riemannian $n$-manifold}:  \mathrm{Ric}_g \ge (n-1)\xi,\; \mathrm{Inj}(M,g)\ge r_0\}.
\]
\subsection{Harmonic Coordinates} \label{sec:TheClass}

Anderson proved that if a 
Riemannian manifold $(M,g)$ belongs to $\mathcal{M}$, then a harmonic coordinate chart of uniform radius exists. This will serve as the basic analytic framework for all local estimates in this article. We state it below.

\begin{theorem}{\cite{CheegerAnderson,Hebey}}
\label{thm:harmonic_radius}
Let $(M,g)\in \mathcal{M}(n,\xi,r_0)$. For $n<q<\infty$, there exists a positive constant $r_H=r_H(q, (n,\xi,r_0))<r_0$ such that on any ball of radius $r_H$  a harmonic coordinate chart $\{x_i\}_{i=1}^n$ is defined. If $g_{ij}=g\big{(}\frac{\partial}{\partial x_i},\frac{\partial}{\partial x_j}\big{)}$  then
\begin{enumerate}
    \item $\frac{1}{2}\delta_{ij}\leq g_{ij}\leq 2\delta_{ij}$, and
    \item $r_H^{1-n/q}\|\partial g_{ij}\|_{L^q}\leq 1$.
\end{enumerate}
\end{theorem}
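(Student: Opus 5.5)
This result is Anderson's harmonic radius theorem, and the paper quotes it from \cite{CheegerAnderson,Hebey}. So the plan is to reconstruct the standard proof: a contradiction argument combined with blow-up and compactness. Fix $q>n$. For each point $x\in M$, let $r_H(x)$ be the supremum of those $r$ for which the ball $B_x(r)$ carries harmonic coordinates satisfying the two bounds (1) and (2) of Theorem \ref{thm:harmonic_radius} with radius $r$. Both conditions are scale invariant in the right way: (1) is unchanged by scaling, and $r^{1-n/q}\|\partial g_{ij}\|_{L^q}$ is invariant under $g\mapsto \lambda^2 g$. The first step is to check that $r_H(x)$ is positive and depends continuously (indeed Lipschitz) on $x$. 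Positivity comes from solving a Dirichlet problem for $\Delta_g x_i=0$ with boundary data given by normal coordinates on small balls; continuity follows because a chart on $B_x(r)$ restricts to one on $B_y(r-d(x,y))$. Since $M$ is closed, $r_H(M)=\min_x r_H(x)$ is positive and attained. The goal is a lower bound depending only on $(q,n,\xi,r_0)$.

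Suppose no such bound exists. Then there are $(M_k,g_k)\in\mathcal M(n,\xi,r_0)$ and points $x_k$ with $r_H(x_k)=r_H(M_k)\to 0$. Rescale to $h_k=r_H(M_k)^{-2}g_k$. Then $\mathrm{Ric}_{h_k}\ge (n-1)\xi\, r_H(M_k)^2\to 0$, $\mathrm{Inj}(h_k)\to\infty$, and $r_H(M_k,h_k)=1$ is attained at $x_k$. Because the harmonic radius is at least $1$ everywhere, the uniform $W^{1,q}$ bounds on the $g_{ij}$ in harmonic charts give $C^{\alpha}$ bounds on the metric coefficients, with $\alpha=1-n/q$, by Morrey's embedding. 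Together with the Bishop--Gromov volume control, Arzel\`a--Ascoli then yields a subsequence converging in the pointed $C^{\alpha'}$ and weak $W^{1,q}$ sense to a limit $(N,h,x_\infty)$, where $h$ is a $C^\alpha\cap W^{1,q}$ metric on a complete manifold.

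The key analytic step, and the one I expect to be the main obstacle, is upgrading this convergence so that the harmonic radius passes to the limit. In harmonic coordinates the Ricci tensor reads
\[
-\tfrac12 g^{ab}\partial_a\partial_b g_{ij}+Q_{ij}(g,\partial g)=\mathrm{Ric}_{ij}.
\]
With only a lower Ricci bound, this is not an equation with controlled right-hand side. The trick I would try is to use the injectivity radius lower bound together with the Ricci lower bound in the rescaled limit. The limit metric has $\mathrm{Ric}\ge 0$ in the weak sense and infinite injectivity radius. Splitting/volume-rigidity arguments (Cheeger--Colding, or Anderson's original argument via the harmonic-coordinate equation and volume cone rigidity) should then show that $(N,h)$ is flat $\mathbb R^n$. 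The second part of this step is lower semicontinuity: one must show $r_H(x_\infty)\le\liminf r_H(x_k)=1$. This holds because harmonic coordinates on the $h_k$-balls converge, by elliptic estimates for $\Delta_{h_k}$ with $C^\alpha$ coefficients, to harmonic coordinates for $h$, and the strict inequalities in (1) and (2) would otherwise persist for slightly larger radii along the sequence.

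Finally, $\mathbb R^n$ has infinite harmonic radius, since the standard coordinates satisfy (1) and (2) on every ball. Moreover, by convergence of the $h_k$ to the flat metric in $W^{1,q}$ on compact sets, the balls $B_{x_k}(2)$ carry harmonic coordinates meeting the bounds with constants close to $1$ and $0$. This contradicts $r_H(x_k)=1$. Taking $r_H$ below $r_0$ is then a harmless decrease.
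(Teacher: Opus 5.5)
The paper gives no proof of this statement and only cites Anderson--Cheeger and Hebey. Your outline follows the standard blow-up scheme for such results: a scale-invariant radius, rescaling at a point of minimal radius, $C^{\alpha'}$ and weak $W^{1,q}$ compactness, a flat limit, and a contradiction. That architecture is right. However, the two steps that carry the content are not established, and the decisive one is argued incorrectly.

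\textbf{The contradiction step.} You obtain $h_k\to h$ only in $C^{\alpha'}$ and \emph{weakly} in $W^{1,q}$. Yet in your last paragraph you use convergence in $W^{1,q}$ to conclude that $B_{x_k}(2)$ carries harmonic charts with $r^{1-n/q}\|\partial g_{ij}\|_{L^q}$ small.

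Condition (1) does transfer: harmonic coordinates for $h_k$ with transplanted boundary data converge in $C^{1,\alpha'}$, so the coefficients converge uniformly. The whole contradiction therefore rests on (2), and (2) does not transfer. The derivatives of the new coefficients involve $\partial h_k$, which converges only weakly in $L^q$. Under weak convergence the norm is merely lower semicontinuous, $\|\partial h\|_{L^q}\le\liminf_k\|\partial h_k\|_{L^q}$, which is the wrong direction. So flatness of the limit says nothing about $\|\partial h_k\|_{L^q}$.

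For example, in dimension two, where conformal coordinates are harmonic, $g_k=(1+k^{-1}\sin(kx_1))\delta$ tends to $\delta$ uniformly and weakly in $W^{1,q}$, while $\|\partial g_k\|_{L^q}$ stays bounded below. Hence the reasoning behind your claim that ``strict inequalities persist for slightly larger radii'' fails for (2).

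That example has curvature of order $k$ of both signs, which shows what is missing. You need an argument that uses $\mathrm{Ric}_{h_k}\ge-\varepsilon_k$ to rule out such oscillation and upgrade to strong $W^{1,q}$ convergence. One possibility is the Bochner identity $\tfrac12\Delta|dy|^2=|\nabla dy|^2+\mathrm{Ric}(dy,dy)$ for the harmonic coordinate functions; it yields strong $L^2$ convergence of $\partial h_k$, which must then still be pushed to $L^q$. In your proposal the curvature hypothesis never enters at this point, which is exactly where it has to.

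\textbf{Flatness of the limit.} This is only asserted, and the tools you invoke do not apply as stated. The metric $h$ is merely $C^\alpha\cap W^{1,q}$, so its geodesics, injectivity radius and Ricci curvature have no classical meaning. Anderson's harmonic-coordinate argument produces a smooth Ricci-flat limit only under a two-sided Ricci bound.

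A route that works is as follows. Every $h_k$-geodesic of length less than $\mathrm{Inj}(h_k)\to\infty$ is minimizing. Hence any two points of the pointed Gromov--Hausdorff limit lie on a line. Iterating the Cheeger--Colding splitting theorem for limits with $\mathrm{Ric}\ge-\varepsilon_k\to0$ then shows the limit is isometric to $\mathbb{R}^n$. Finally, $h$ is flat by the regularity of distance-preserving maps (Calabi--Hartman).

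\textbf{A minor point.} $\mathcal{M}(n,\xi,r_0)$ contains noncompact complete manifolds, where $\inf_x r_H(x)$ need not be attained. A standard point-selection argument should replace your appeal to closedness.
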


From here onwards, $r_H$ denotes the uniform harmonic radius obtained from
Theorem~\ref{thm:harmonic_radius}.
\begin{defn}[$\varepsilon$-discretization] \label{defn:epsilondiscretization} For any positive $\epsilon$, a finite set $X\subset M$ is called an \emph{$\varepsilon$--discretization} of $(M,g)$ if:
\begin{enumerate}
    \item $d(p,q)\ge 2\varepsilon$ for all distinct $p,q\in X$;
    \item the family of balls $\{B(p,2\varepsilon)\}_{p\in X}$ covers $M$.
\end{enumerate}

\end{defn}

\par We state a standard comparison principle for eigenvalues of quadratic forms under injective linear maps.
\begin{lemma}
\label{lem:comparing eigenvalues by injective map}
Let $(H_1,\langle\cdot,\cdot\rangle_1)$ and $(H_2,\langle\cdot,\cdot\rangle_2)$ be Hilbert spaces, and let
\[
\Phi : H_1 \to H_2
\]
be a linear injective map.  
Let $Q_1$ and $Q_2$ be quadratic forms on $H_1$ and $H_2$, respectively.  
Assume that there exist positive constants $C_1,C_2>0$ such that for all $f\in H_1$,
\[
\langle f,f\rangle_1 \le C_1 \langle \Phi(f),\Phi(f)\rangle_2,
\qquad
Q_1(f) \ge C_2\, Q_2(\Phi(f)).
\]
Then, for every $k\in\mathbb{N}$,
\[
\lambda_k(Q_1) \ge \frac{C_2}{C_1}\,\lambda_k(Q_2).
\]
\end{lemma}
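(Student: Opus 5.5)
The statement to prove is Lemma~\ref{lem:comparing eigenvalues by injective map}. I would use the min–max characterization of eigenvalues of a quadratic form. Assume, as is implicit in the setting, that $Q_1$ and $Q_2$ are nonnegative closed forms with discrete spectrum, and index eigenvalues with multiplicity starting from the same base index on both sides. Then
\[
\lambda_k(Q_i)=\inf_{\substack{V\subset \mathrm{Dom}(Q_i)\\ \dim V=k+1}}\ \sup_{0\neq f\in V}\frac{Q_i(f)}{\langle f,f\rangle_i}
\]
(or with $\dim V=k$, depending on the indexing convention; the argument is the same). We may take $H_1$ to mean the form domain of $Q_1$, so that $\Phi$ is defined there and maps into the form domain of $Q_2$, since $Q_2(\Phi(f))$ must make sense.

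Fix $\varepsilon>0$ and choose a subspace $V\subset H_1$ of the required dimension with $\sup_{0\ne f\in V} Q_1(f)/\langle f,f\rangle_1\le \lambda_k(Q_1)+\varepsilon$. Because $\Phi$ is linear and injective, $W:=\Phi(V)$ is a subspace of $H_2$ with $\dim W=\dim V$, so it is admissible in the min–max for $Q_2$. Every nonzero $h\in W$ can be written uniquely as $h=\Phi(f)$ with $0\ne f\in V$. The two hypotheses then give
\[
\frac{Q_2(\Phi f)}{\langle \Phi f,\Phi f\rangle_2}\le \frac{C_1}{C_2}\cdot\frac{Q_1(f)}{\langle f,f\rangle_1},
\]
using $Q_2(\Phi f)\le C_2^{-1}Q_1(f)$ and $\langle\Phi f,\Phi f\rangle_2^{-1}\le C_1\langle f,f\rangle_1^{-1}$. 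The second bound needs $\langle \Phi f,\Phi f\rangle_2>0$, which holds by injectivity. It also needs $Q_2(\Phi f)\ge 0$, so that multiplying the two inequalities preserves direction, and this is where nonnegativity of the forms is used.

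Taking the supremum over $W$ gives $\lambda_k(Q_2)\le \frac{C_1}{C_2}(\lambda_k(Q_1)+\varepsilon)$. Letting $\varepsilon\to0$ yields $\lambda_k(Q_1)\ge \frac{C_2}{C_1}\lambda_k(Q_2)$.

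The main point needing care is not analytic but bookkeeping. First, the min–max must be applied with the same indexing convention for both forms: "$k$-th positive eigenvalue" versus counting from $\lambda_0$. The dimension shift is identical on both sides because injectivity preserves dimension, so the conclusion is unaffected. Second, one must ensure $Q_2\ge0$ on the image of $\Phi$, or else restrict to $V$ on which $Q_1\ge 0$. If the forms could be negative, I would instead argue directly with the sign of the Rayleigh quotient. In the intended applications (Dirichlet energies and $\|d\omega\|^2+\|d^*\omega\|^2$) both forms are nonnegative, so this is automatic.
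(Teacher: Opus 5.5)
Your min--max argument is correct. The paper gives no proof of this lemma and quotes it as a standard comparison principle. Your argument (push an almost-optimal subspace $V\subset H_1$ forward to $\Phi(V)$, then use $Q_2(\Phi f)/\langle\Phi f,\Phi f\rangle_2\le \frac{C_1}{C_2}\,Q_1(f)/\langle f,f\rangle_1$) is exactly the standard proof being invoked.

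One sharpening: nonnegativity is a genuine hypothesis, not a technicality, so your proposed fallback for signed forms cannot work. Take $H_1=H_2=\mathbb{R}$, $\Phi=\mathrm{id}$, $Q_1=Q_2=-|\cdot|^2$, $C_1=2$, $C_2=1$. The hypotheses hold, but the conclusion would read $-1\ge -\tfrac12$, which is false. The lemma should therefore be read with $Q_1\ge 0$ added. That condition suffices for your multiplication step, and your assumption $Q_2\ge 0$ already implies it via $Q_1(f)\ge C_2Q_2(\Phi f)$.
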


The next lemma provides a domain decomposition estimate for the Hodge Laplacian. This allows us to control global eigenvalues in terms of local Dirichlet problems.

\begin{lemma}
\label{lem: manifoldanddirichlet}    
Let $(M,g)$ be a closed Riemannian $n$-manifold and $D_1,\dots,D_j \subset M$ be pairwise disjoint domains in M. Let $\lambda^{D}_{j,p}(D_i)$ be the eigenvalue of the Hodge Laplacian $\Delta$ acting on $p$-forms on $D_i$
with Dirichlet boundary conditions. Then, 
\begin{equation*}
\lambda_{jl - 1,p}
\;\le\;
\max_{1 \le i \le j} \lambda^{D}_{l-1,p}(D_i). 
\end{equation*}
\end{lemma}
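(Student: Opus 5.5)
The plan is to use the min-max characterization of the Hodge Laplacian eigenvalues together with extension by zero. I will index the eigenvalues of $\Delta$ on $p$-forms on $M$ as $\lambda_{0,p}\le \lambda_{1,p}\le\cdots$, counted with multiplicity, and similarly the Dirichlet eigenvalues on each $D_i$ as $\lambda^D_{0,p}(D_i)\le\lambda^D_{1,p}(D_i)\le\cdots$. With this convention, $\lambda_{jl-1,p}$ is the $jl$-th eigenvalue. By min-max, it suffices to exhibit a $jl$-dimensional subspace $V$ of $H^1$ $p$-forms on $M$ on which the Rayleigh quotient
\[
R(\omega)=\frac{\|d\omega\|^2+\|d^*\omega\|^2}{\|\omega\|^2}
\]
is bounded by $\Lambda:=\max_i \lambda^D_{l-1,p}(D_i)$.

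For each $i$, let $\omega^i_0,\dots,\omega^i_{l-1}$ be $L^2(D_i)$-orthonormal Dirichlet eigenforms on $D_i$ with eigenvalues $\lambda^D_{0,p}(D_i),\dots,\lambda^D_{l-1,p}(D_i)$. Here the Dirichlet condition means the forms vanish on $\partial D_i$, i.e.\ they lie in the $H^1_0$-closure of compactly supported forms. The Dirichlet quadratic form on $D_i$ is $Q_i(\omega)=\int_{D_i}|d\omega|^2+|d^*\omega|^2$, and the eigenforms are $Q_i$-orthogonal. Hence every $\eta$ in $\mathrm{span}\{\omega^i_a\}_a$ satisfies $Q_i(\eta)\le \lambda^D_{l-1,p}(D_i)\|\eta\|^2_{L^2(D_i)}$.

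Next I extend each $\omega^i_a$ by zero to a form $\tilde\omega^i_a$ on $M$. Because $\omega^i_a$ lies in $H^1_0(D_i)$, the extension lies in $H^1(M)$. Moreover $d\tilde\omega^i_a$ and $d^*\tilde\omega^i_a$ are the zero-extensions of $d\omega^i_a$ and $d^*\omega^i_a$, since these are local operators and the identity holds for compactly supported smooth approximants and passes to the limit. Since the $D_i$ are pairwise disjoint, the $jl$ forms $\tilde\omega^i_a$ are $L^2$-orthonormal on $M$. The cross terms also vanish:
\[
\langle d\tilde\omega^i_a,d\tilde\omega^{i'}_b\rangle=0,\qquad \langle d^*\tilde\omega^i_a,d^*\tilde\omega^{i'}_b\rangle=0 \qquad (i\neq i').
\]
So for $\omega=\sum_i \eta_i$ with $\eta_i$ in the span for $D_i$, we get
\[
Q(\omega)=\sum_i Q_i(\eta_i)\le \Lambda\sum_i\|\eta_i\|^2=\Lambda\|\omega\|^2 .
\]
The space $V=\mathrm{span}\{\tilde\omega^i_a\}$ is therefore $jl$-dimensional with $R\le\Lambda$ on it. Min-max then gives $\lambda_{jl-1,p}\le\Lambda$.

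The only delicate step is confirming that the zero-extension is admissible for the full quadratic form $\|d\omega\|^2+\|d^*\omega\|^2$, i.e.\ that no boundary contribution (such as a jump of the tangential or normal component) appears across $\partial D_i$. I handle this with the Dirichlet (full vanishing) condition: the form-domain density argument with $C_c^\infty(D_i)$ forms shows the extension lies in $H^1(M)$, and there $Q$ coincides with the closure of the form $\int |d\omega|^2+|d^*\omega|^2$ on smooth forms, whose form domain on a closed manifold is $H^1$. If the Dirichlet spectrum were instead defined via absolute or relative conditions, this step would fail, so the argument depends on reading "Dirichlet" as full vanishing.
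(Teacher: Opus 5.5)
Your proof is correct and takes essentially the same route as the paper: extend Dirichlet ($H^1_0$) forms from the pairwise disjoint $D_i$ by zero and apply the min-max principle for $\Delta$ on $M$. The paper compresses this through the direct sum $\bigoplus_i H^1_0(\Lambda^pT^*D_i)$ and its injective-map comparison lemma, and as literally written it asserts the Rayleigh-quotient bound on all of that space. Your explicit $jl$-dimensional span of the first $l$ eigenforms on each $D_i$ is exactly the subspace on which that bound actually holds, so your version states the argument more precisely.
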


\begin{proof}

Define the Hilbert space $H_1 = \bigoplus_{i=1}^{j} H^1_0\!\left(\Lambda^p T^*D_i\right)$ and for $\omega=\left(\omega_1,\omega_2,\dots,\omega_j\right)$ let
\[
\langle \omega,\omega \rangle_{H_1}
=
\sum_{i=1}^{j} \int_{D_i} |\omega_i|^2 \, dv_g,
\]
and quadratic form
\[
Q_1(\omega)
=
\sum_{i=1}^{j}
\int_{D_i}
\bigl(
|d\omega_i|^2 + |d^*\omega_i|^2
\bigr)\, dv_g.
\]

Each $\omega_i$ is extended by zero outside $D_i$, which is admissible since the domains are pairwise disjoint and the boundary conditions are Dirichlet.
From Lemma \ref{lem:comparing eigenvalues by injective map},
\[
\frac{Q_1(\omega)}{\langle \omega,\omega \rangle_{H_1}}
\;\le\;
\max_{1 \le i \le j} \lambda^{D}_{l-1,p}(D_i).
\]
Hence the min-max principle for $\Delta$ on $M$ yields the result.
\end{proof}

As a consequence of the domain decomposition estimate, we obtain an upper bound on the global eigenvalues in terms of local Dirichlet problems on the discretization balls.

\begin{corollary}
    \label{cor:upper-bound-max}
Let $\varepsilon<r_H$, $X=\{x_i\}$ be an $\varepsilon$--discretization of $M$ and $\{B_i=B(x_i,\varepsilon)\}$.  Then,
\[\lambda_{|X|-1,p}\leq\max_i\lambda_{0,p}^{D}(B_i).\]
\end{corollary}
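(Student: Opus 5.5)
The plan is to apply Lemma \ref{lem: manifoldanddirichlet} with $j=|X|$ domains $D_i=B_i=B(x_i,\varepsilon)$ and $l=1$. This gives
\[
\lambda_{|X|-1,p}\le \max_i \lambda^{D}_{0,p}(B_i).
\]
So the proof has two parts: check that the balls $B_i$ are pairwise disjoint, and check that they satisfy the hypotheses implicit in the lemma (each is a genuine domain on which the Dirichlet problem for $\Delta$ on $p$-forms is well posed).

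\textbf{Disjointness.} By condition (1) of Definition \ref{defn:epsilondiscretization}, distinct centres satisfy $d(x_i,x_k)\ge 2\varepsilon$. If $y\in B(x_i,\varepsilon)\cap B(x_k,\varepsilon)$, the triangle inequality would give $d(x_i,x_k)<2\varepsilon$, a contradiction. I will take open balls; with closed balls one would instead shrink the radius slightly and use monotonicity.

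\textbf{Well-posedness of the Dirichlet problem.} Since $\varepsilon<r_H<r_0\le \mathrm{Inj}(M,g)$, each $B_i$ is a smooth embedded geodesic ball. It lies inside a harmonic coordinate chart from Theorem \ref{thm:harmonic_radius}. On such a ball the Dirichlet realisation of $dd^*+d^*d$, with form domain $H^1_0(\Lambda^pT^*B_i)$, has discrete spectrum by Rellich compactness, so $\lambda^D_{0,p}(B_i)$ is defined. The set $X$ is finite by definition, so the maximum over $i$ is taken over finitely many values and is finite.

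\textbf{Indexing.} With $l=1$ the lemma's index on the left is $jl-1=|X|-1$ and on the right is $l-1=0$. This requires only one test form per ball: take a first Dirichlet eigenform $\omega_i$ on each $B_i$ and extend it by zero. This gives $|X|$ forms with disjoint supports, which are $L^2$-orthogonal and have Rayleigh quotients at most $\max_i\lambda^D_{0,p}(B_i)$. Their span is a $|X|$-dimensional test space, so min-max bounds the $|X|$-th eigenvalue counted from zero, which is $\lambda_{|X|-1,p}$.

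\textbf{Main obstacle.} The step needing care is this index convention. It has to match the convention used in Lemma \ref{lem: manifoldanddirichlet}, in particular whether harmonic forms, i.e. the zero eigenvalues, are counted among the $\lambda_{k,p}$. I would fix that the indexing starts at $k=0$ and includes the kernel, so the bound is exactly the statement of the corollary.
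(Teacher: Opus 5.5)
Your proposal is correct and is essentially the paper's argument. The paper states the corollary as an immediate consequence of the domain-decomposition lemma with $j=|X|$ and $l=1$. Your disjointness check via $d(x_i,x_k)\ge 2\varepsilon$ and your indexing from $0$ with harmonic forms counted supply exactly the details the paper leaves implicit; that indexing is also the convention the paper uses when it passes from $|X(\varepsilon)|\ge k+1$ to a bound on $\lambda_{k,p}$.
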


\section{Upper Bound of the Hodge Laplacian Eigenvalues}\label{sec:upperbound}
In this section, we derive an upper bound for the eigenvalues of the Hodge Laplacian. We combine local Dirichlet eigenvalue estimates with a domain decomposition to derive a global, Cheng-type estimate.

The following lemma establishes a uniform upper bound for the Dirichlet eigenvalues on geodesic balls of radius $\varepsilon$ smaller than the harmonic radius $r_H$ for manifolds  $(M,g) \in \mathcal{M}(n,\xi,r_0)$. 
This estimate will later be used to derive upper bounds for $\lambda_{k,p}$ on balls centered at the points of an $\varepsilon$-discretization of $M$.
\begin{lemma}    \label{lem:upper bound dirichlet eigenvalue}
Let $(M,g)\in \mathcal{M}(n,\xi,r_0)$ and $x\in M.$ For $0<\varepsilon\le r_H$,  let $B=B(x, \varepsilon)$ and $B_\xi(\varepsilon)$ be a ball of radius $\varepsilon$ in the space of constant curvature $\xi.$ Then   
\[\lambda_{0,p}^D(B)\le 2^{2p+1}\lambda_0^D(B_\xi(\varepsilon))\]
where $\lambda_0^D(B)$ denotes the first Dirichlet eigenvalue of the Laplace-Beltrami operator acting on functions.
\end{lemma}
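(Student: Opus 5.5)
We bound the Rayleigh quotient of a test $p$-form built from a Dirichlet eigenfunction and harmonic coordinates. Let $u$ be a first Dirichlet eigenfunction of the Laplace--Beltrami operator on $B=B(x,\varepsilon)$, so $u\in H^1_0(B)$ and $\int_B|\nabla u|^2=\lambda_0^D(B)\int_B u^2$. Since $\varepsilon\le r_H$, Theorem~\ref{thm:harmonic_radius} gives harmonic coordinates $\{x_i\}$ on $B$. Each $x_i$ is harmonic, so $dx_i$ is closed and $d^*dx_i=-\Delta_g x_i=0$; the same holds for $\eta=dx_{1}\wedge\cdots\wedge dx_{p}$, which satisfies $d\eta=0$. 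The test form is $\omega=u\,\eta\in H^1_0(\Lambda^pT^*B)$.

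\textbf{Step 1: the energy.} We have $d\omega=du\wedge\eta$ and $d^*\omega=-\iota_{\nabla u}\eta+u\,d^*\eta$. For $p=1$ the extra term vanishes because $d^*dx_1=0$. For $p\ge2$, $d^*\eta$ involves derivatives of $g_{ij}$; these are controlled only in $L^q$ by item~(2) of Theorem~\ref{thm:harmonic_radius}, with no pointwise bound. This is the main obstacle. To avoid it, we pass to the Hodge dual: $d^*$ on $p$-forms is conjugate to $d$ on $(n-p)$-forms, and $d(u\,*\eta)=du\wedge *\eta+u\,d{*\eta}$. This still leaves a curvature-free but metric-derivative term.

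\textbf{Preferred route.} I would instead choose the test form $\omega=u\,dx_{i_1}\wedge\cdots\wedge dx_{i_p}$ and use the pointwise inequality $|d\omega|^2+|d^*\omega|^2\le 2|du|^2|\eta|^2+\ldots$ only after integrating by parts. Write $\|d\omega\|^2+\|d^*\omega\|^2=\langle\Delta\omega,\omega\rangle$ and use $\Delta(u\eta)=(\Delta u)\eta-2\nabla_{\nabla u}\eta+u\Delta\eta$. Since $\eta$ is a wedge of harmonic $1$-forms, $\Delta\eta$ reduces via the Weitzenb\"ock identity to first-order terms. Alternatively, average over all multi-indices $I$ and exploit $\sum_I|\eta_I|^2$ together with the bounds $\tfrac12\delta\le g\le 2\delta$. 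Each $|dx_i|^2=g^{ii}\in[\tfrac12,2]$, so $|\eta|^2\in[2^{-p},2^p]$ by Hadamard-type bounds on the Gram determinant. This yields the energy bound $\|d\omega\|^2+\|d^*\omega\|^2\le 2\cdot 2^{p}\int|du|^2$, while the norm satisfies $\|\omega\|^2\ge 2^{-p}\int u^2$. Their ratio gives exactly $2^{2p+1}\lambda_0^D(B)$. The factor $2$ comes from $|d\omega|^2+|\iota_{\nabla u}\eta|^2\le 2|du|^2|\eta|^2$, which in fact holds with equality as $|du\wedge\eta|^2+|\iota\,\eta|^2=|du|^2|\eta|^2$, leaving room for the error term.

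\textbf{Step 2: comparison.} Finally, bound $\lambda_0^D(B)\le\lambda_0^D(B_\xi(\varepsilon))$ by Cheng's Dirichlet comparison theorem under $\mathrm{Ric}\ge(n-1)\xi$; this applies because $\varepsilon<r_0\le\mathrm{Inj}$, so $B$ is a genuine geodesic ball.

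The genuine difficulty is the zeroth-order term $u\,d^*\eta$ for $p\ge2$. I would first try to show it vanishes or is absorbed using harmonicity of the coordinates. If that fails, I would fall back on H\"older's inequality with the $L^q$ bound on $\partial g$ and a Sobolev inequality for $u$ on $B$, accepting a slightly worse constant.
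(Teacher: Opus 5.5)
You have a genuine gap, and it is the one you flag yourself. For $p\ge 2$, with $\eta=dx_1\wedge\cdots\wedge dx_p$, nothing in your argument controls the zeroth-order term $u\,d^*\eta$ in $d^*\omega=-\iota_{\nabla u}\eta+u\,d^*\eta$. Nor does it control the cross term $-2\int_B u\langle \iota_{\nabla u}\eta, d^*\eta\rangle$. The energy bound $2\cdot 2^p\int_B|du|^2$ is reached only by discarding both.

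None of the suggested remedies removes these terms. Since $d\eta=0$, $\Delta\eta=dd^*\eta$ contains second derivatives of $g$, on which you have no bound at all. For $2\le p\le n-2$ the Weitzenb\"ock curvature term involves the full curvature tensor, which a Ricci lower bound does not control. Integrating $\int_B u^2\langle dd^*\eta,\eta\rangle$ by parts just returns $\int_B u^2|d^*\eta|^2-2\int_B u\langle d^*\eta,\iota_{\nabla u}\eta\rangle$. Averaging over multi-indices fails because each $d^*\eta_I$ is generically nonzero. The ``factor $2$ of room'' cannot absorb $\int_B u^2|d^*\eta|^2$, which is not pointwise dominated by $|du|^2$.

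Your fallback (H\"older with the $L^q$ bound on $\partial g$, then Gagliardo--Nirenberg with $\theta=n/q$ and Young) does go through. However, it gives $\lambda^D_{0,p}(B)\le C(n,p,q)\bigl(\lambda^D_0(B)+r_H^{-2}\bigr)$, which is weaker than the statement. The constant depends on $q$ rather than being $2^{2p+1}$. Turning $r_H^{-2}\le\varepsilon^{-2}$ into a multiple of $\lambda^D_0(B_\xi(\varepsilon))$ also needs a lower bound on the model eigenvalue, available e.g.\ for $\xi\le 0$. Your identity $|du\wedge\eta|^2+|\iota_{\nabla u}\eta|^2=|du|^2|\eta|^2$ and your Step 2 are fine.

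The paper disposes of this term by asserting $d^*\omega_0=0$ for $\omega_0=dy^1\wedge\cdots\wedge dy^p$. It deduces this from its coordinate formula for $d^*$ together with $g^{kl}\Gamma^j_{kl}=0$. Your suspicion is justified: that formula omits the Christoffel terms $g^{kl}\Gamma^j_{l i_m}\alpha_{k i_1\cdots j\cdots i_{p-1}}$ (with $j$ in the $m$-th slot) acting on the remaining indices, and these do not cancel by antisymmetry. Concretely, for $g=e^{2u}(dx^2+dy^2)$ the functions $x,y$ are harmonic coordinates. Yet $*(dx\wedge dy)=e^{-2u}$, so $d^*(dx\wedge dy)=-*d(e^{-2u})=2e^{-2u}*du\neq 0$. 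Taking a product with a flat factor shows the same thing in every dimension.

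So the paper does not supply the missing step either. Neither your argument nor the paper's establishes the lemma with constant $2^{2p+1}$ for $2\le p\le n-2$. The degrees $p=1$, $p=n-1$ and $p=n$ do close in your framework, using $\eta=dy^1$, $\eta=*dy^1$ and $\eta=dv_g$ respectively. Each of these is closed and co-closed with $|\eta|^2\in[\tfrac12,2]$, so the Rayleigh quotient of $u\eta$ is at most $4\lambda^D_0(B)$. For the middle degrees you need a genuinely new ingredient, such as a co-closed correction of $\omega_0$ with pointwise control (which requires smallness of $\partial g$, e.g.\ by shrinking the radius). Otherwise you must settle for the weaker fallback bound.
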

\begin{proof}
Since $\varepsilon<r_H$, we choose  harmonic coordinates $\{y_j\}_{j=1}^n$ in $B$ and  define
\[
\omega_0 = dy^{1}\wedge\cdots\wedge dy^{p}.
\]
For a $p$ form $\alpha=\sum_{i_1<i_2<\cdots <i_{p}}\alpha_{i_1i_2\cdots  i_{p}}dx^{i_1}\wedge dx^{i_2}\cdots dx^{i_{p}}$, from (3.3.47) \cite{jost}
\begin{equation}
\label{codifferential_expansion}
    \left(d^*\alpha\right)_{i_1i_2\cdots i_{p-1}}=-g^{kl}\left(\frac{\partial \alpha_{ki_1i_2\cdots i_{p-1}}}{\partial x^l}-\Gamma^j_{kl}\alpha_{ji_1\cdots i_{p-1}}\right).
\end{equation}
For harmonic coordinates $\{y_j\}_{j=1}^n$ in $B$,  we have $g^{kl}\Gamma_{kl}^j=0$ for all $j=1,2,\cdots,n$. Hence, from (\ref{codifferential_expansion}) 
\begin{equation}
    \label{codifferential of omega0}
    d^*\omega_0=0.
\end{equation}

Let $f\in H^1_0(B)$ and $\omega=f\omega_0$. By the minmax principle,
\begin{equation}
    \label{eq:rayleigh_quotient_dirichlet}
    \lambda_{0,p}^{D}(B)\le \frac{\int_{B} (|d\omega|^2+|d^*\omega|^2)v_g}{\int_{B}|\omega|^2 v_g}.
\end{equation}

Now, \begin{equation} \label{eq:domegabound}
 |d\omega|^2=|df\wedge\omega_0|^2\le |df|^2|\omega_0|^2
\end{equation}
On the other hand, using (\ref{codifferential of omega0}), 
\begin{equation}
\label{eq:codifferential_in_coords}
    d^*f\omega_0= \left(g^{1l}\frac{\partial f}{dx^l}\right)dx^2\wedge \cdots\wedge dx^p.
\end{equation}
Using diagonalization and change of variables in Theorem \ref{thm:harmonic_radius} we can derive for all $1\le i,j\le n$,
\begin{equation}
 \frac{1}{2}\delta_{ij}\le g^{ij}\le 2\delta_{ij} \nonumber
\end{equation}
where $(g^{ij})=(g_{ij})^{-1}.$ These also give bounds on the eigenvalues of all submatrices of $(g^{ij})$ and hence, their determinants. Also $|dx^1(\triangledown f)|^2\le |df|^2 .$
Using these bounds in (\ref{eq:codifferential_in_coords}), we have
\begin{equation}
    \label{eq:norm_codifferential_bound}
    |d^*f\omega_0|^2\le 2^{p}|df|^2.
\end{equation} 
Also 
\begin{equation}
  \label{eq:bound_on_norm_omega0}  
  |\omega_0|^2\ge \frac{1}{2^p}.
\end{equation}

Substituting (\ref{eq:domegabound}), (\ref{eq:norm_codifferential_bound}) and (\ref{eq:bound_on_norm_omega0}) in (\ref{eq:rayleigh_quotient_dirichlet}),
\[\lambda_{0,p}^D(B)\le 2^{2p+1} \frac{\int_B |df|^2v_g}{\int_B|f|^2v_g}\]
for all $f\in H^1_0(B)$. Hence, 
\[\lambda_{0,p}^D(B)\le 2^{2p+1} \lambda_{0,p}^D(B).\] 

Hence, from Theorem 1.1 in \cite{Cheng},
\[\lambda_{0,p}^D(B)\le 2^{2p+1}\lambda_0^D(B_\xi(\varepsilon)).\]

\end{proof}

In particular, we obtain a similar result related to Lott's eigenvalue bounds \cite{LottJ} where we use Ricci curvature and injectivity radius bounds instead of sectional curvature and volume constraints. 
\subsection{Proof of Theorem \ref{thm:main_theorem}:}

\begin{proof}
    Since $(M,g)$ is closed, by the Hopf-Rinow theorem, there exists a minimizing geodesic $\gamma$ of length $D$. We choose the following decomposition of $\gamma$:
    \[\gamma(0),\gamma\left(\frac{D}{k}\right),\gamma\left(\frac{2D}{k}\right),\cdots,\gamma\left(\frac{(k-1)D}{k}\right),\gamma(D).\]
Since $\gamma$ is minimizing, each of the points in the above partition are equally spaced by distance $\frac{D}{k}$. Let $B_i=B\left(\gamma\left(\frac{iD}{k}\right),\frac{D}{2k}\right)$. For $i\neq j$, if $p\in B_i\cap B_j$, then \[d\left(\gamma\left(\frac{iD}{k}\right),\left(\frac{jD}{k}\right)\right)\le d\left(p,\left(\frac{iD}{k}\right)\right)+ d\left(p,\left(\frac{jD}{k}\right)\right)<\frac{D}{k}\] which is a contradiction. Hence, $B_i\cap B_j=\phi$ for all $i\neq j.$ \\ Choose $\varepsilon= \frac{D}{2k}$ and let $X(\varepsilon)$ be an $\varepsilon$-discretization of $M$ by choosing an extension of the balls $B_i$ defined above. Clearly, $|X(\varepsilon)|\ge k+1$. For $k\ge\frac{D}{2r_H}$, we have $\varepsilon\le r_H$. 

Hence, we can apply Lemma \ref{lem:upper bound dirichlet eigenvalue}
and conclude that 
\[\lambda_{k,p}\le 2^{2p+1} \lambda_0^D\left(B_\xi\left(\frac{D}{2k}\right)\right), \quad \forall k\ge \frac{D}{2r_H}\]
and \[\lambda_{k,p}\le 2^{2p+1} \lambda_0^D\left(B_n(\xi,r_H\right)),\qquad \forall k\le \frac{D}{2r_H}.\]
\end{proof}

As a consequence of the above theorem, we obtain the following corollaries.

\begin{corollary}
\label{cor:positive_ricci}
Let $(M,g)$ be a closed Riemannian manifold with non-negative Ricci curvature.  If the harmonic radius is bounded below by $r_H$ and $\mathrm{ diam}(M)= D$ then 

\[\lambda_{k,p}\le 2^{2p+1}n^2\pi^2\frac{k^2}{D^2},\qquad \forall k\ge\frac{D}{2r_H}\]
and \[\lambda_{k,p}\le 2^{2p-1}n^2\pi^2r_H^{-2},\qquad \forall k\le \frac{D}{2r_H}.\]

\end{corollary}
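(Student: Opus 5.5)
The plan is to specialize Theorem \ref{thm:main_theorem} to $\xi=0$ and then bound the Euclidean Dirichlet eigenvalue $\lambda_0^D(B_0(r))$ explicitly. With $\mathrm{Ric}_g\ge 0=(n-1)\cdot 0$, Theorem \ref{thm:main_theorem} gives
\[
\lambda_{k,p}\le 2^{2p+1}\lambda_0^D\bigl(B_0(\tfrac{D}{2k})\bigr)\quad (k\ge \tfrac{D}{2r_H}),\qquad
\lambda_{k,p}\le 2^{2p+1}\lambda_0^D\bigl(B_0(r_H)\bigr)\quad (k\le \tfrac{D}{2r_H}),
\]
where $B_0(r)$ is a Euclidean ball of radius $r$ in $\mathbb{R}^n$. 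So the whole task is to produce an explicit upper bound for $\lambda_0^D(B_0(r))$.

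By scaling, $\lambda_0^D(B_0(r))=r^{-2}\lambda_0^D(B_0(1))=j_{n/2-1,1}^2/r^2$. Rather than quote Bessel zero asymptotics, I will bound it with a test function in the Rayleigh quotient. Take $f(x)=\cos\bigl(\tfrac{\pi|x|}{2r}\bigr)$, or equivalently use Cheng's estimate from \cite{Cheng}. Both give a bound of the form $\lambda_0^D(B_0(r))\le C_n/r^2$, with $C_n$ of order $n^2$.

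To land on the constants in the statement, I want $\lambda_0^D(B_0(r))\le n^2\pi^2/(4r^2)$. Cheng's radial test function estimate should yield exactly this normalization (he derives $\lambda\le \frac{n(n+4)}{4}\cdot\frac{\pi^2}{ r^2}$-type bounds; I would check the version giving $\frac{n^2\pi^2}{4r^2}$, which dominates $j_{n/2-1,1}^2$ because $j_{\nu,1}\le \frac{\pi}{2}(\nu+2)$-type estimates suffice for $n\ge 2$). Then:
\begin{itemize}
\item for $r=\frac{D}{2k}$ the first case gives $2^{2p+1}\cdot\frac{n^2\pi^2}{4}\cdot\frac{4k^2}{D^2}=2^{2p+1}n^2\pi^2\frac{k^2}{D^2}$;
\item for $r=r_H$ the second case gives $2^{2p+1}\cdot\frac{n^2\pi^2}{4r_H^2}=2^{2p-1}n^2\pi^2r_H^{-2}$.
\end{itemize}
Both match the statement.

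The main obstacle is the explicit constant: verifying $j_{n/2-1,1}\le \frac{n\pi}{2}$ for all $n\ge 2$. I would try the known bound $j_{\nu,1}\le \pi(\nu+\tfrac12)$-ish, or directly the Rayleigh quotient with $f=\cos(\pi|x|/2r)$. For that $f$, the numerator is $\int_0^r \frac{\pi^2}{4r^2}\sin^2(\cdot)\,t^{n-1}dt$ and the denominator is $\int_0^r\cos^2(\cdot)\,t^{n-1}dt$. Their ratio is bounded by comparing weights near $t=r$, where $\cos^2$ is small but $t^{n-1}$ is largest. This gives a factor of order $n^2$, and the estimate should be tuned to give at most $n^2$ times $\pi^2/(4r^2)$. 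If that computation is delicate, I would simply cite Cheng's explicit estimate of $\lambda_0^D(B_0(r))$ in \cite{Cheng}, which the paper indicates is the intended source.
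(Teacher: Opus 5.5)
Your reduction is the same as the paper's: set $\xi=0$ in Theorem~\ref{thm:main_theorem}, so that everything rests on the single inequality $\lambda_0^D(B_0(r))\le \frac{n^2\pi^2}{4r^2}$. Given that inequality, your arithmetic at $r=\frac{D}{2k}$ and $r=r_H$ correctly gives both stated bounds. The gap is that you never establish this inequality, and it is the whole content of the corollary. You list three possible routes and complete none, and the concrete ingredients you name do not deliver it:
\begin{itemize}
\item The Bessel bound $j_{\nu,1}\le\pi(\nu+\frac12)$ already fails for $n=2$: there $\nu=0$ and $j_{0,1}\approx 2.405>\pi/2$.
\item The Cheng-type constant you recall, $\frac{n(n+4)\pi^2}{4r^2}$, is \emph{larger} than $\frac{n^2\pi^2}{4r^2}$. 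It would only give $2^{2p+1}n(n+4)\pi^2k^2/D^2$, which is weaker than the statement.
\item For the test function $\cos(\pi|x|/2r)$, the heuristic about weights near $t=r$ does not prove that the ratio of the radial integrals is at most $n^2$ for every $n\ge 2$. The claim is true, but it needs an actual estimate.
\end{itemize}

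The missing step is a one-line domain-monotonicity argument. This is what the paper does, and it produces the constant $n^2\pi^2$ exactly. The open cube $(-r/\sqrt n,\,r/\sqrt n)^n$ has side $s=2r/\sqrt n$ and its vertices at distance $r$ from the centre, so it lies in $B_0(r)$. Its first Dirichlet eigenvalue is $n\pi^2/s^2=\frac{n^2\pi^2}{4r^2}$, so monotonicity gives $\lambda_0^D(B_0(r))\le \frac{n^2\pi^2}{4r^2}$.

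An alternative also works. The test function $r^2-|x|^2$ gives exactly $\lambda_0^D(B_0(r))\le\frac{n(n+4)}{2r^2}$, which is the estimate behind Cheng's bound $2n(n+4)k^2/D^2$. Since $\frac{n(n+4)}{2}\le\frac{n^2\pi^2}{4}$ for $n\ge 2$, this suffices.

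With either inequality inserted, your specialization of Theorem~\ref{thm:main_theorem} completes the proof.
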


\begin{proof}
    From the proof of Theorem 2.1 in \cite{Cheng}, we observe that there is an n-cube with sides $\frac{2}{\sqrt{n}}\frac{D}{2k}$ contained in $B_n(\xi,\frac{D}{2k})$. Using the first eigenvalue of the cube and the domain monotonicity theorem, the result follows.
\end{proof}
For a non-positive lower bound on the Ricci curvature, we have the following upper bound on $\lambda_{k,p}.$ 
\begin{corollary} Let $(M,g)$ be a closed Riemannian manifold with $\rm{Ric}\geq -(n-1)\xi$.  If the harmonic radius is bounded below by $r_H$ and $\mathrm{diam}(M)= D$ then 
\begin{enumerate}
    \item For $n=2(m+1)$ where $m=0,1,2,\cdots$

\[\lambda_{k,p}\le 2^{2p-1}(2m+1)^2\xi+2^{2p+3}(1+2^m)^2\pi^2\frac{k^2}{D^2},\qquad \forall k\ge\frac{D}{2r_H}\]
and \[\lambda_{k,p}\le 2^{2p-1}(2m+1)^2\xi+2^{2p+1}(1+2^m)^2\pi^2r_H^{-2},\qquad \forall k\le \frac{D}{2r_H}.\]
\item For $n=2m+3$, $m=0,1,2,\cdots$
                    \[\lambda_{k,p}\le 2^{2p-1}(2m+2)^2\xi+2^{2p+3}(1+2^{2m})^2(1+\pi^2)\frac{k^2}{D^2},\qquad \forall k\ge\frac{D}{2r_H}\]
     and \[\lambda_{k,p}\le 2^{2p-1}(2m+2)^2\xi+2^{2p+1}(1+2^{2m})^2(1+\pi^2)r_H^{-2},\qquad \forall k\le \frac{D}{2r_H}.\]       
\end{enumerate}  
\end{corollary}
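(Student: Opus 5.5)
The plan mirrors the proof of Corollary \ref{cor:positive_ricci}. By Theorem \ref{thm:main_theorem}, applied with curvature parameter $-\xi$, it is enough to bound the Dirichlet eigenvalue $\lambda_0^D(B_{-\xi}(r))$ of a ball of radius $r$ in hyperbolic space of curvature $-\xi$. We take $r=\frac{D}{2k}$ for the first inequality in each case and $r=r_H$ for the second. The tool is Cheng's explicit estimate from the proof of his comparison theorem in \cite{Cheng}: for a geodesic ball of radius $r$ in the space form of curvature $-\xi$,
\[
\lambda_0^D(B_{-\xi}(r))\le \frac{(n-1)^2}{4}\xi + C_n\, r^{-2}.
\]
The constant $C_n$ depends only on the parity of $n$. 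Cheng obtains it by writing the radial test function $\sin(\pi t/r)$, or a suitable variant, against the warped volume density $\sinh^{n-1}(\sqrt{\xi}t)$. He then removes the $\sinh$ weight either by a conjugation or by the substitution $u=(\sinh)^{(n-1)/2}f$.

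\textbf{Step 1.} I will first recall the computation in each parity case.
\begin{itemize}
\item For $n=2(m+1)$, the exponent $(n-1)/2=m+\frac12$ is a half-integer, so $(n-1)^2/4=(2m+1)^2/4$. After the substitution, the error term is bounded by a constant of the form $(1+2^m)^2\pi^2 r^{-2}$.
\item For $n=2m+3$, the exponent $(n-1)/2=m+1$ is an integer, so $(n-1)^2/4=(m+1)^2=(2m+2)^2/4$. The remainder term produces $(1+2^{2m})^2(1+\pi^2)r^{-2}$.
\end{itemize}
I will take these constants directly from Cheng's estimates rather than re-derive them. The only check needed is that the stated form matches, namely $\frac{(2m+1)^2}{4}\xi+4(1+2^m)^2\pi^2 r^{-2}$ in the even case and the analogous expression in the odd case.

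\textbf{Step 2.} Next I multiply by the prefactor $2^{2p+1}$ from Theorem \ref{thm:main_theorem}.
\begin{itemize}
\item The curvature term becomes $2^{2p+1}\cdot\frac{(2m+1)^2}{4}\xi=2^{2p-1}(2m+1)^2\xi$, and similarly $2^{2p-1}(2m+2)^2\xi$ in the odd case. Both match the statement.
\item For $k\ge \frac{D}{2r_H}$, substituting $r=\frac{D}{2k}$ gives $r^{-2}=4k^2/D^2$. Together with $2^{2p+1}$ this yields the factor $2^{2p+3}k^2/D^2$.
\item For $k\le\frac{D}{2r_H}$, substituting $r=r_H$ gives the factor $2^{2p+1}r_H^{-2}$.
\end{itemize}
The powers of two therefore dictate the normalization: the constant in Step 1 must be exactly $(1+2^m)^2\pi^2$, respectively $(1+2^{2m})^2(1+\pi^2)$, multiplying $r^{-2}$.

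\textbf{Main obstacle.} The main obstacle is pinning down Cheng's explicit constants in the negative-curvature case with the normalization assumed here, and in particular handling $\mathrm{Ric}\ge-(n-1)\xi$ with $\xi>0$, as opposed to $\xi$ being a curvature value. If Cheng's stated constants do not match directly, my fallback is to verify the bound myself. I would insert the test function $u(t)=\sin(\pi t/r)$ into the radial Rayleigh quotient with weight $\sinh^{n-1}(\sqrt\xi t)$. I would then bound $\sinh^{n-1}$ above and below on $[0,r]$ by expanding $\sinh^{n-1}$ as a finite sum of exponentials, which is exact in the odd case. The binomial sums produce the factors $2^m$ and $2^{2m}$, and the condition $t\le r$ controls the remaining terms.
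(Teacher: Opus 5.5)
Your proposal follows the paper's own route. The paper's proof is the single sentence ``use Corollary 2.3 of \cite{Cheng} in Lemma~\ref{lem:upper bound dirichlet eigenvalue}'': it inserts Cheng's explicit bound for the first Dirichlet eigenvalue of a ball in the space form of curvature $-\xi$ into Theorem~\ref{thm:main_theorem}. Your Step 2 bookkeeping reproduces the stated constants, using $2^{2p+1}\cdot\frac14=2^{2p-1}$ and $r^{-2}=4k^2/D^2$ for $r=D/(2k)$.

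\textbf{A slip to fix.} The expression you give in Step 1 as the form to check, $\frac{(2m+1)^2}{4}\xi+4(1+2^m)^2\pi^2r^{-2}$, has a spurious factor $4$. Used as a ball bound, it would produce $2^{2p+5}$ and $2^{2p+3}$ instead of $2^{2p+3}$ and $2^{2p+1}$. The ball bound you need is $\frac{(2m+1)^2}{4}\xi+(1+2^m)^2\pi^2r^{-2}$ in the even case and $(m+1)^2\xi+(1+2^{2m})^2(1+\pi^2)r^{-2}$ in the odd case, exactly as your own Step 2 concludes. The factor $4$ belongs only to the diameter form $4(1+2^m)^2\pi^2k^2/D^2$, obtained after substituting $r=D/(2k)$.

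\textbf{The fallback.} If you verify the constants yourself, make sure $\sin(\pi t/r)$ plays the role of the conjugated function $u$. That is, the test function should be $f=\sinh^{-(n-1)/2}(\sqrt\xi\, t)\sin(\pi t/r)$. This converts the weighted quotient into $\int_0^r(u'^2+Vu^2)\,dt\big/\int_0^r u^2\,dt$ with
$V=\frac{(n-1)^2}{4}\xi+\frac{(n-1)(n-3)}{4}\xi\,\mathrm{csch}^2(\sqrt\xi\, t)$, which produces the leading term $\frac{(n-1)^2}{4}\xi$ directly. By contrast, inserting $\sin(\pi t/r)$ itself against the weight $\sinh^{n-1}(\sqrt\xi\, t)$ gives a Rayleigh quotient tending to $\frac{(n-1)^2}{2}\xi$ as $\sqrt\xi\, r\to\infty$. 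No two-sided bounds on $\sinh^{n-1}$ can recover the sharp coefficient from that test function.
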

\begin{proof}
  Using Corollary 2.3 \cite{Cheng} in Lemma \ref{lem:upper bound dirichlet eigenvalue}, we get the required bounds.
\end{proof}

\begin{theorem}
\label{thm:volume}
 Let $0\le r_H\le \frac{1}{\sqrt{|\xi|}}$ and $(M,g)$ be a closed Riemannian $n$-manifold with volume $V$. Assume that the Ricci curvature satisfies $\mathrm{Ric}_g \ge (n-1)\xi .$ Then   for $n>1$,

\[
\lambda_{k,p}
\le
2^{2p+n+5}\cdot (k+1)\cdot \left(\frac{\alpha_n}{V}\right)^\frac{2}{n},\qquad \mathrm{for }\hspace{1mm} k>\frac{2^nV}{\alpha_{n-1}r_H^n}.
\]
where $\alpha_n=\mathrm{vol}(S^n).$
\end{theorem}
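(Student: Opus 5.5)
We will follow the same discretization scheme as in the proof of Theorem \ref{thm:main_theorem}. This time, though, we control the number of disjoint balls through the volume rather than through a diameter-realizing geodesic. We then bound the Dirichlet eigenvalue on each ball via Lemma \ref{lem:upper bound dirichlet eigenvalue}, using an explicit estimate for $\lambda_0^D(B_\xi(\varepsilon))$ that is valid when $\varepsilon\le r_H\le 1/\sqrt{|\xi|}$.

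\textbf{Step 1: choice of radius.} Given $k$, define $\varepsilon$ by the relation $k+1 \approx V/(2^{n}\,\mathrm{vol}\,B_\xi(2\varepsilon))$. Equivalently, choose $\varepsilon$ so that
\[
\varepsilon^n \sim \frac{2^n V}{\alpha_{n-1}(k+1)} .
\]
The hypothesis $k>2^nV/(\alpha_{n-1}r_H^n)$ is exactly what guarantees $\varepsilon\le r_H$, and hence also $\varepsilon\le 1/\sqrt{|\xi|}$. Take an $\varepsilon$-discretization $X$ of $M$. The balls $B(x,2\varepsilon)$, $x\in X$, cover $M$. By Bishop--Gromov, each such ball has volume at most $\mathrm{vol}\,B_\xi(2\varepsilon)$. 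When $2\varepsilon\sqrt{|\xi|}\lesssim 1$ this is bounded by a dimensional multiple of $\frac{\alpha_{n-1}}{n}(2\varepsilon)^n$, after comparing $\sinh$ or $\sin$ with the linear function on this range. It follows that
\[
|X|\ \ge\ \frac{V}{\mathrm{vol}\,B_\xi(2\varepsilon)}\ \ge\ k+1 .
\]

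\textbf{Step 2: eigenvalue bound.} The balls $B(x,\varepsilon)$, $x\in X$, are pairwise disjoint. Corollary \ref{cor:upper-bound-max} then gives
\[
\lambda_{k,p}\le \lambda_{|X|-1,p}\le \max_x \lambda^D_{0,p}(B(x,\varepsilon)),
\]
and Lemma \ref{lem:upper bound dirichlet eigenvalue} bounds the right-hand side by $2^{2p+1}\lambda_0^D(B_\xi(\varepsilon))$. For $\varepsilon\sqrt{|\xi|}\le1$ we use the cube argument from Corollary \ref{cor:positive_ricci}, or Cheng's estimate when $\xi<0$. On this range the model metric is bi-Lipschitz to the Euclidean one with a dimensional constant, so we get $\lambda_0^D(B_\xi(\varepsilon))\le C_n\varepsilon^{-2}$ with $C_n\sim n^2\pi^2$ up to a factor $2^{O(1)}$. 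Substituting the chosen $\varepsilon$ yields a bound of the form
\[
\lambda_{k,p}\le 2^{2p+1}C_n\Big(\frac{\alpha_{n-1}(k+1)}{2^nV}\Big)^{2/n}.
\]
Since $(k+1)^{2/n}\le k+1$, this can be rewritten in the displayed form. The remaining work is to absorb $n^2\pi^2$ and the ratio $(\alpha_{n-1}/\alpha_n)^{2/n}$ into the factor $2^{n+4}$, using crude bounds such as $n^2\le 2^{n+1}$ and $\alpha_{n-1}/\alpha_n\le n$.

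\textbf{Main obstacle.} The hardest part is the uniform comparison of the model quantities $\mathrm{vol}\,B_\xi(2\varepsilon)$ and $\lambda_0^D(B_\xi(\varepsilon))$ with their Euclidean counterparts. This must hold with constants independent of $\xi$ on the range $\varepsilon\le 1/\sqrt{|\xi|}$. There is one subtlety: when $2\varepsilon$ slightly exceeds $1/\sqrt{|\xi|}$, the volume comparison in the case $\xi<0$ costs a factor like $\sinh(2)^{n-1}$. To avoid this, I would first try to work directly with balls of radius $\varepsilon\le r_H$ and count them using $\varepsilon$-balls together with a doubling factor $2^n$. That doubling factor is the source of the $2^n$ appearing in both the threshold on $k$ and the constant.
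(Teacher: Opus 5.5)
Your strategy differs from the paper's and can be made to prove the stated bound, but Step~1 as written fails. You count an $\varepsilon$-discretization through the $2\varepsilon$-cover and Bishop's inequality, bound each local Dirichlet eigenvalue by $2^{2p+1}\lambda_0^D(B_\xi(\varepsilon))$, and compare the model ball with a Euclidean one. The problem is that your two prescriptions for $\varepsilon$ are not equivalent. With $\mathrm{vol}\,B_0(r)=\frac{\alpha_{n-1}}{n}r^n$, the relation $k+1\approx V/(2^n\,\mathrm{vol}\,B_\xi(2\varepsilon))$ gives $\varepsilon^n\approx\frac{nV}{4^n\alpha_{n-1}(k+1)}$. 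The formula you actually use is $\varepsilon^n=\frac{2^nV}{\alpha_{n-1}(k+1)}$; you use it to make the threshold on $k$ ``exactly'' the condition $\varepsilon\le r_H$, and again in your final display. For that choice $\mathrm{vol}\,B_0(2\varepsilon)=\frac{4^nV}{n(k+1)}$, so the covering argument only gives $|X|\ge n(k+1)/4^n$, not $|X|\ge k+1$. Hence your displayed bound $2^{2p+1}C_n\bigl(\frac{\alpha_{n-1}(k+1)}{2^nV}\bigr)^{2/n}$ is unjustified. Any $\varepsilon$ for which this counting does produce $k+1$ points makes $\varepsilon^{-2}$ larger by at least a factor $16\,n^{-2/n}$. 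After that, the crude absorption you propose ($n^2\le 2^{n+1}$, $\alpha_{n-1}/\alpha_n\le n$) cannot close, because it leaves $2^{n+1}\pi^2>2^{n+4}$.

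The repair is easy, and it also removes your ``main obstacle''.
\begin{itemize}
\item Put $\xi_-=\min\{\xi,0\}$ and $\omega_n=\alpha_{n-1}/n$, and define $\varepsilon$ by $\mathrm{vol}\,B_{\xi_-}(2\varepsilon)=V/(k+1)$. Bishop's inequality then gives $|X|\ge k+1$.
\item The model volume dominates the Euclidean one, so the hypothesis on $k$ yields $\omega_n(2\varepsilon)^n<\alpha_{n-1}r_H^n/2^n$, i.e.\ $\varepsilon<\frac{n^{1/n}}{4}r_H$. Hence $t:=\sqrt{|\xi_-|}\,\varepsilon<0.37$; the hypothesis gives far more than $\varepsilon\le r_H$.
\item Compare volume densities in polar coordinates, and use the cube bound (plus Cheng's comparison if $\xi>0$). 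This gives $\mathrm{vol}\,B_{\xi_-}(2\varepsilon)\le\bigl(\frac{\sinh 2t}{2t}\bigr)^{n-1}\omega_n(2\varepsilon)^n$ and $\lambda_0^D(B_\xi(\varepsilon))\le\bigl(\frac{\sinh t}{t}\bigr)^{n-1}\frac{n^2\pi^2}{4\varepsilon^2}$. Combining,
\[
\lambda_{k,p}\le 2^{2p+1}\,n^2\pi^2\,\delta_n\Bigl(\frac{\omega_n}{\alpha_n}\Bigr)^{2/n}(k+1)^{2/n}\Bigl(\frac{\alpha_n}{V}\Bigr)^{2/n},\qquad \delta_n=\Bigl(\frac{\sinh t}{t}\Bigr)^{n-1}\Bigl(\frac{\sinh 2t}{2t}\Bigr)^{2(n-1)/n}.
\]
\item One has $n^2\pi^2\delta_n\le 2^{n+4}$ for all $n\ge 2$; the tightest case is $n=3$, where $9\pi^2\delta_3\approx 104<128$. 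This gives the statement.
\end{itemize}
Note that on the full range $t\le 1$ your comparison costs $(\sinh 1)^{n-1}$, not $2^{O(1)}$. With the cube bound, the constant would then already exceed $2^{n+4}$ at $n=4$, so the improved range is what makes your route close.

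The paper proceeds differently. It takes $\varepsilon=\frac{2}{\sqrt{|\xi|}}\operatorname{arcsinh}s$ with $s^n=\frac{V|\xi|^{n/2}}{\alpha_n(k+1)}$, and never passes through $\lambda_0^D(B_\xi(\varepsilon))$. Instead it inserts the cone function $f=1-\varepsilon^{-1}d(x_0,\cdot)$ into the test form $f\omega_0$ and uses the Bishop--Gromov ratio $V_\xi(\varepsilon)/V_\xi(\varepsilon/2)\le 2^n\cosh^n(\sqrt{|\xi|}\,\varepsilon/2)$; this is where its $2^n$ comes from. The hypothesis $r_H\le 1/\sqrt{|\xi|}$ is used only to get $s<1$. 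Your route avoids that doubling loss. Your $2\varepsilon$-cover is also the right counting device: the paper divides $V$ by $V_\xi(\varepsilon/2)$, but balls of radius $\varepsilon/2$ about an $\varepsilon$-discretization are disjoint rather than covering, so that step only bounds $|X(\varepsilon)|$ from above.
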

\begin{proof}
We choose \[\varepsilon
=
\frac{2}{\sqrt{|\xi|}}\operatorname{arcsinh}\left[\left(\frac{ V|\xi|^\frac{n}{2}}{\alpha_n(k+1)}\right)^{1/n}\right].
\]  

From Bishop's theorem, 
\begin{equation}
\label{eq:volume_ratio}
  |X(\varepsilon)|\ge \frac{V}{\sup_{x\in X(\varepsilon)}\{B(x,\varepsilon/2)\}}\ge \frac{V}{V_{\xi}(\varepsilon/2)}.  
\end{equation}
\begin{align}
\mathrm{Vol}_{\xi}(\varepsilon/2)
&=
\alpha_{n-1}
\int_{0}^{\varepsilon/2}
\left(
\frac{\sinh(\sqrt{|\xi|}t)}{\sqrt{|\xi|}}
\right)^{n-1}
dt \nonumber\\
&=
\frac{\alpha_{n-1}}{|\xi|^{n/2}}
\int_0^{\varepsilon \sqrt{|\xi|}/2}
\sinh^{n-1}(t)\,dt \nonumber\\
&\le
\frac{\alpha_{n-1}}{|\xi|^{n/2}}
\left(
\sinh\left(\frac{\varepsilon \sqrt{|\xi|}}{2}\right)
\right)^n \nonumber
\end{align}

Substituting the above bound and chosen $\varepsilon$ in (\ref{eq:volume_ratio}),

\begin{align}
|X(\varepsilon)|\ge 
\frac{V |\xi|^{n/2}}
{\alpha_{n-1}
\left(
\sinh\left(\frac{\varepsilon \sqrt{|\xi|}}{2}\right)
\right)^n} \ge k+1.
\end{align}
Now $y\le \sinh{y}\implies \frac{1}{y^n}\ge \frac{1}{(\sinh y)^n}$. Hence for $k>\frac{2^nV}{\alpha_{n-1}r_H^n}$, we have $k>\frac{V|\xi|^\frac{n}{2}}{\alpha_{n-1}(\sinh(r_H\sqrt{|\xi|}/2))^n}.$ Hence,  $\varepsilon<r_H$ for $k>\frac{2^nV}{\alpha_{n-1}r_H^n}$ . Using Lemma \ref{lem:upper bound dirichlet eigenvalue}, for $f:B(x,\varepsilon)\rightarrow\mathbb{R}$ defined as $f(y)=1-\frac{1}{\varepsilon}d(x,y)$. Then $|df|\ge \frac{1}{\varepsilon}$ and $|f|\ge\frac{1}{2}$ in $B\left(x,\frac{\varepsilon}{2}\right)$. Hence, using $\omega=f\omega_0$ as the test form in the lemma and using Bishop-Gromov volume ratio, we have
\begin{equation}
    \label{eq:lambda upper bound for volume}
    \lambda_{k,p}\le 2^{2p+3}\frac{V_\xi(\varepsilon)}{\varepsilon^2 V_\xi(\frac{\varepsilon}{2})}, \qquad \mathrm{for}\hspace{1mm} k>\frac{2^nV}{\alpha_{n-1}r_H^n}.
\end{equation}
Let $x^n=\frac{ V|\xi|^\frac{n}{2}}{\alpha_n(k+1)}$. Then \begin{align}
    \frac{V_\xi(\varepsilon)}{ V_\xi(\frac{\varepsilon}{2})} &\le \left[\frac{\sinh\left(\varepsilon\sqrt{|\xi|}\right)}{\sinh\left(\varepsilon\sqrt{|\xi|}/2\right)}\right]^n \nonumber\\
    &\le 2^n\left[\cosh\left(\varepsilon\sqrt{|\xi|}/2\right)
    \right]^n \nonumber \\
    &\le 2^n\left[\cosh\left(\sinh^{-1}x\right)\right]^n \nonumber\\
    &\le 2^n\left(1+x\right)^n.
\end{align}
Since $\sinh^{-1}x\ge \frac{x}{1+x}$, substituting $\varepsilon$ in (\ref{eq:lambda upper bound for volume})  gives
\begin{equation}
\label{eq:upper bound after sinh substitution}
    \lambda_{k,p}\le 2^{2p+3}|\xi|\frac{(1+x)^{n+2}}{x^2}.
\end{equation}
Now $r_H\le \frac{1}{\sqrt{|\xi|}}$implies $\sinh \left(r_H\sqrt{|\xi|}/2\right)<1$. Hence,  $k>\frac{2^nV}{\alpha_{n-1}r_H^n}>\frac{V|\xi|^\frac{n}{2}}{\alpha_{n-1}(\sinh(r_H\sqrt{|\xi|}/2))^n}> \frac{V|\xi|^\frac{n}{2}}{\alpha_{n-1}}$ which implies $x<1.$\\

Substituting in (\ref{eq:upper bound after sinh substitution}),
\[ \lambda_{k,p}\le 2^{2p+n+5}\cdot (k+1)\cdot \left(\frac{\alpha_n}{V}\right)^\frac{2}{n}.\]

\end{proof}
\begin{corollary} \label{cor:connectionLap}
    Let $(M,g)$ be a closed $n$-manifold with ${\rm inj}(M,g) \ge r_0$, ${\rm diam}(M) \le D <\infty$, $ {\rm Ric}_g \ge 0$. Let $r_H$ denote the lower bound on the harmonic radius of M. Then the first non-zero eigenvalue $\lambda^C_{1,1}$ of the connection Laplacian $\nabla^{*} \nabla$ acting on the space of $1$-forms satisfies
   \[ \lambda_{1,1}^C\le \frac{2^{2p+1}n^2\pi^2}{r_H^2}.\]
\end{corollary}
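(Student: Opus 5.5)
We compare the connection Laplacian on $1$-forms with the Hodge Laplacian through the B\"ochner (Weitzenb\"ock) formula, and then use the bound already obtained for the Hodge Laplacian in Corollary \ref{cor:positive_ricci} with $p=1$.

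\textbf{Step 1: quadratic-form comparison.} For a $1$-form $\omega$, the Weitzenb\"ock formula reads $\Delta\omega=\nabla^*\nabla\omega+\mathrm{Ric}(\omega^\sharp)^\flat$. Integrating against $\omega$ gives
\[
\int_M \bigl(|d\omega|^2+|d^*\omega|^2\bigr)v_g=\int_M|\nabla\omega|^2v_g+\int_M\mathrm{Ric}(\omega^\sharp,\omega^\sharp)v_g\ \ge\ \int_M|\nabla\omega|^2 v_g,
\]
since $\mathrm{Ric}_g\ge0$. The two operators act on the same Hilbert space $L^2(\Lambda^1T^*M)$ with the same form domain $H^1$. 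So Lemma \ref{lem:comparing eigenvalues by injective map}, applied with $\Phi=\mathrm{id}$ and $C_1=C_2=1$, yields $\lambda_k^C\le\lambda_{k,1}$ for every $k$, with eigenvalues counted in the same way.

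\textbf{Step 2: index bookkeeping.} This is the main point to check. If the zero modes of $\nabla^*\nabla$ (parallel forms) and of $\Delta$ (harmonic forms) are counted differently, I will use min–max directly. The $k$-dimensional space spanned by the first $k$ Hodge eigenforms gives Rayleigh quotients for $\nabla^*\nabla$ that are at most $\lambda_{k,1}$. Parallel $1$-forms are harmonic, so the kernel of $\nabla^*\nabla$ sits inside the harmonic space. Taking one more Hodge eigenform than the dimension of the connection kernel then produces a nonzero connection eigenvalue that is at most the corresponding Hodge eigenvalue. For $\lambda^C_{1,1}$ it is enough to use a suitable low-index Hodge eigenvalue, or, more robustly, to bound Dirichlet eigenvalues on disjoint balls.

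\textbf{Step 3: the bound.} The cleanest route avoids the index issue altogether. Take two disjoint balls of radius $\varepsilon=\min(r_H,D/2)\le r_H$. On each ball $B$, the test form $\omega=f\,dy^1$ from Lemma \ref{lem:upper bound dirichlet eigenvalue} with $p=1$ satisfies, by the same Weitzenb\"ock identity (the boundary terms vanish under Dirichlet conditions),
\[
\int_B|\nabla\omega|^2\le \int_B\bigl(|d\omega|^2+|d^*\omega|^2\bigr)\le 2^{2p+1}\lambda_0^D(B_0(\varepsilon))\int_B|\omega|^2 .
\]
Gluing these forms as in Lemma \ref{lem: manifoldanddirichlet} gives a $2$-dimensional test space. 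Hence the first two connection eigenvalues, and in particular the first nonzero one, are at most $2^{2p+1}\lambda_0^D(B_0(r_H))$. This uses domain monotonicity when $D/2<r_H$, which in any case gives a stronger bound.

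Finally, the cube-inside-ball estimate from the proof of Corollary \ref{cor:positive_ricci} gives $\lambda_0^D(B_0(r_H))\le n^2\pi^2/(4r_H^2)\le n^2\pi^2/r_H^2$, which yields the stated inequality with $p=1$.
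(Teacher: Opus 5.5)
Your Step 3 does not avoid the index issue; it is where the issue actually bites. Two disjoint balls with one test form each give a $2$-dimensional test space, so min--max bounds only $\mu_1$. That is the second eigenvalue of $\nabla^*\nabla$ counted with multiplicity \emph{including} the kernel. The first nonzero eigenvalue is $\mu_m$, where $m=\dim\ker\nabla^*\nabla$ is the number of independent parallel $1$-forms. Under $\mathrm{Ric}_g\ge 0$, Bochner gives $m=b_1(M)$, which can be as large as $n$ (flat $T^n$). For $m\ge 2$ your estimate reads $0=\mu_1\le 2^{2p+1}\lambda_0^D(B_0(\varepsilon))$ and says nothing about $\lambda^C_{1,1}$.

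What you need is a test space of dimension at least $m+1$, i.e.\ a test form orthogonal to all parallel forms. A parallel form is determined by its value at one point, so $m\le n$, and the repair is cheap: on each ball use all of $f\,dy^1,\dots,f\,dy^n$. For $\theta=\sum_i c_i\,dy^i$ one has $d\theta=0$ and $d^*\theta=0$ in harmonic coordinates, so
\[
|d(f\theta)|^2+|d^*(f\theta)|^2=|df\wedge\theta|^2+\langle df,\theta\rangle^2=|df|^2|\theta|^2 .
\]
With $\tfrac12|c|^2\le|\theta|^2\le 2|c|^2$, every element of this $n$-dimensional space has Hodge Rayleigh quotient at most $4\int_B|df|^2/\int_B f^2$. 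Two disjoint balls then give a space of dimension $2n\ge m+1$, and your local Weitzenb\"ock inequality passes the bound to $\nabla^*\nabla$.

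The paper follows your Steps 1--2 instead. Bochner gives $\ker\nabla^*\nabla=\ker\Delta$ (you recorded only the inclusion), so the quadratic-form comparison matches the first nonzero eigenvalues, and the paper then cites Corollary \ref{cor:positive_ricci}. That corollary, however, comes from Corollary \ref{cor:upper-bound-max} with one Dirichlet form per ball, whose count also includes harmonic forms. So the same dimension count is needed there too, and the multi-form test space is what settles it.

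Your handling of the case $D/2<r_H$ also has the monotonicity backwards. Dirichlet eigenvalues decrease as the ball grows, so $\lambda_0^D(B_0(D/2))\ge\lambda_0^D(B_0(r_H))$, and you cannot pass to $2^{2p+1}\lambda_0^D(B_0(r_H))$. What rescues the stated constant is $r_H\le\operatorname{diam}(M)$: indeed $r_H<r_0\le\operatorname{inj}(M)\le\operatorname{diam}(M)$ on a closed manifold. Take $D$ to be the actual diameter so that your two balls are disjoint. Then $\varepsilon\ge r_H/2$, and the inscribed cube of side $r_H/\sqrt n$ gives $\lambda_0^D(B_0(\varepsilon))\le\lambda_0^D(B_0(r_H/2))\le n^2\pi^2/r_H^2$.

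So the factor $4$ that your last line treats as slack is exactly what this case uses up. This is also what the paper's closing appeal to domain monotonicity does when $D<2r_H$, where Corollary \ref{cor:positive_ricci} with $k=1$ only gives $2^{2p+1}n^2\pi^2/D^2$. With both repairs, your direct local argument yields $\lambda^C_{1,1}\le 4n^2\pi^2/r_H^2$, which is within the stated $2^{2p+1}n^2\pi^2/r_H^2$ for $p=1$.
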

\begin{proof} For a smooth Riemannian manifold $(M,g)$ and a smooth $1$-form $\omega$, we recall the Bochner (Weitzenb\"ock) formula for $1$-forms:
\begin{equation}
\label{eq: weitzenbock pointwise}
\Delta \omega
=
\nabla^{*}\nabla \omega
+
\mathrm{Ric}(\omega),
\end{equation}
where $\nabla^{*}\nabla$ is the rough (connection) Laplacian, and $\mathrm{Ric}(\omega)$ is the action of the Ricci tensor on $\omega$.

Taking the $L^2$-inner product of \eqref{eq: weitzenbock pointwise} with $\omega$  and integrating over $M$, we obtain
\[
\int_M \langle \Delta \omega , \omega \rangle
=
\int_M \langle \nabla^{*}\nabla \omega , \omega \rangle
+
\int_M \mathrm{Ric}(\omega,\omega).
\]

Since $M$ is closed, by integration by parts we obtain
\begin{equation*}
\int_M \bigl( |d \omega|^2 +  |\delta \omega|^2\bigr)
=
\int_M |\nabla \omega|^2
+
\int_M \mathrm{Ric}(\omega,\omega).
\end{equation*}

For $\mathrm{Ric}\ge 0$, on $1$-forms we have $\mathrm{ker}(\nabla^*\nabla)=\mathrm{ker}(\Delta)$. Also,
\begin{equation*}
   \frac{\int_M |\nabla \omega|^2}{\int_M |\omega|^2}\le \frac{\int_M \bigl( |d \omega|^2 +  |\delta \omega|^2\bigr)}{\int_M | \omega|^2 }.
\end{equation*}
Applying min-max principle,
\[\lambda_{1,1}^C\le \lambda_{1,1}(M).\]
We then use Corollary \ref{cor:positive_ricci} and domain monotonicity of eigenvalues to obtain the desired inequality.
\end{proof}

\section{Bound on spectral gap for non-compact manifolds} \label{sec:non_compact}
In this section, we study bounds for the spectral gap of the Hodge Laplacian acting on $p$-forms on complete non-compact Riemannian manifolds. Since the spectrum may contain continuous components in the non-compact setting, we work with the supremum of the $L^2$ spectrum. Using harmonic coordinate estimates in Theorem \ref{thm:harmonic_radius} and comparing with Dirichlet eigenvalues on a sequence of compact balls with increasing radius, we derive upper bounds for the spectral gap under Ricci curvature lower bounds. 
\begin{defn}
    Let M be a $n$-dimensional Riemannian manifold without boundary. The  supremum of $L^2$ spectrum of $p$-forms is defined as
    \[\sigma^p(M)=\inf_{\omega\in \Lambda^p_0(M)}\frac{\int\|d\omega\|^2+\|d^*\omega\|^2}{\int \|\omega\|^2}.\]
    
\end{defn}
The following corollary follows from Lemma \ref{lem:upper bound dirichlet eigenvalue}.
\begin{corollary} For $r_H\rightarrow \infty$ and $\mathrm{Ric}(M)\ge -(n-1)\xi$, 
    \[\sigma^p(M)\le 2^{2p-1}(n-1)^2\xi.\]
\end{corollary}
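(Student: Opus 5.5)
The plan is to bound $\sigma^p(M)$ by the first Dirichlet eigenvalue of $p$-forms on a single geodesic ball, and then let the radius of the ball go to infinity. Since $\sigma^p(M)$ is an infimum of the Rayleigh quotient over compactly supported $p$-forms, every Dirichlet test form on a ball $B(x,R)$, extended by zero, is admissible. This gives
\[\sigma^p(M)\le \lambda^D_{0,p}(B(x,R))\qquad\text{for every } R>0.\]
I would fix a point $x\in M$ and an increasing sequence of radii $R_j\to\infty$ with $R_j\le r_H$. The hypothesis $r_H\to\infty$ is what allows arbitrarily large radii: it says harmonic coordinates satisfying the bounds of Theorem~\ref{thm:harmonic_radius} exist on arbitrarily large balls.

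On each ball $B(x,R_j)$ I would apply Lemma~\ref{lem:upper bound dirichlet eigenvalue} with $\varepsilon=R_j$. Its proof uses only the harmonic chart (through $d^*\omega_0=0$ and the bounds $\tfrac12\delta_{ij}\le g_{ij}\le 2\delta_{ij}$) together with Cheng's comparison for functions. This yields
\[\sigma^p(M)\le 2^{2p+1}\lambda^D_0\bigl(B_{-\xi}(R_j)\bigr).\]
Next I would let $j\to\infty$. Here I would use the classical fact that in hyperbolic space of curvature $-\xi$ the Dirichlet eigenvalues of balls decrease to the bottom of the spectrum, namely
\[\lambda_0^D(B_{-\xi}(R))\searrow \frac{(n-1)^2\xi}{4}.\]
This is McKean's value, and it is also recoverable from Cheng's estimates quoted in the paper (Corollary 2.3 of \cite{Cheng}) in the limit $R\to\infty$. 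Combining,
\[\sigma^p(M)\le 2^{2p+1}\cdot\frac{(n-1)^2\xi}{4}=2^{2p-1}(n-1)^2\xi,\]
which is exactly the claimed constant.

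The main obstacle is making the limit $r_H\to\infty$ rigorous. One has to interpret the hypothesis as saying that the chart estimates of Theorem~\ref{thm:harmonic_radius} hold on balls of every radius, rather than only on balls up to some fixed radius, and one must check that the constants in Lemma~\ref{lem:upper bound dirichlet eigenvalue} do not depend on $\varepsilon$. I would first verify that the pointwise bounds $|d(f\omega_0)|^2\le|df|^2|\omega_0|^2$, $|d^*(f\omega_0)|^2\le 2^p|df|^2$ and $|\omega_0|^2\ge 2^{-p}$ use only the uniform metric comparison. That would make the constant $2^{2p+1}$ scale-free, after which the limit is legitimate. A secondary point is to justify the monotone convergence of the hyperbolic Dirichlet eigenvalue to $(n-1)^2\xi/4$. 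I would cite McKean's inequality for the lower bound, and for the upper bound use the test functions $e^{-(n-1)\sqrt{\xi}\,r/2}$ cut off at large radius.
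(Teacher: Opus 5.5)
This is essentially the paper's argument, with the limit taken at a different point. The paper takes one global harmonic chart (you only need charts on arbitrarily large balls) and uses the test forms $f\omega_0$ of Lemma~\ref{lem:upper bound dirichlet eigenvalue} to get $\sigma^p(M)\le 2^{2p+1}\sigma^0(M)$; it then quotes Cheng's bound $\sigma^0(M)\le (n-1)^2\xi/4$, which follows from exactly your comparison $\lambda_0^D(B(x,R))\le\lambda_0^D(B_{-\xi}(R))$ and $R\to\infty$ (only $\limsup_{R\to\infty}\lambda_0^D(B_{-\xi}(R))\le (n-1)^2\xi/4$ is used, so McKean's lower bound is not needed).

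One caution for the verification you plan, inherited from the paper's lemma rather than specific to your argument: $d^*\omega_0=0$ follows from harmonicity of the coordinates only for $p=1$ and fails in general for $p\ge 2$. For $p=n=2$ in isothermal coordinates $g=e^{2u}\delta$ one gets $|d^*\omega_0|=|d(e^{-2u})|$, so $|d^*(f\omega_0)|^2\le 2^p|df|^2$ is not a consequence of the metric comparison alone. In your limit, however, the extra term $\int f^2|d^*\omega_0|^2$ is negligible by H\"older and Gagliardo--Nirenberg, because $|d^*\omega_0|\lesssim|\partial g|$ and $\|\partial g\|_{L^q}\le r_H^{n/q-1}\to 0$.
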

\begin{proof}
    For $r_H \rightarrow \infty$, we have a harmonic coordinate chart on the entire manifold along with bounds on metric coefficients given in Theorem \ref{thm:harmonic_radius}. Hence, following the proof of Lemma \ref{lem:upper bound dirichlet eigenvalue}, we have  \[\sigma^p(M)\le 2^{2p+1} \sigma^0(M)\]
    where $\sigma^0(M)=\inf_{f\in C_0^\infty(M)}\frac{\int_M |df|^2}{\int_M|f|^2}.$ From Theorem 4.2 \cite{Cheng}, we obtain the result.
    
    \end{proof}

We have the following estimate of the supremum of $L^2$ spectrum of $p$ forms using the Dirichlet eigenvalues on  an exhaustion of the manifold by compact submanifolds 

\begin{lemma}
\label{lem:exhausting_M_dirichlet}
 Let $M$ be a Riemannian $n$-manifold and $M_i\subseteq M$ such that $M_i$ is compact for each $i\in \mathbb{N}.$  If  $\bigcup_i M_i=M$, then
    \[\sigma^p(M)\le\lim_{i\rightarrow\infty} \lambda_{0,p}^D(M_i).\]
\end{lemma}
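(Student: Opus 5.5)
The idea is to use the variational characterization of $\sigma^p(M)$ together with extension by zero, exactly as in the proof of Lemma \ref{lem: manifoldanddirichlet}. To make the limit meaningful, I first arrange monotonicity. If the exhaustion is not nested, I replace $M_i$ by a nested family. Alternatively, it is enough to prove $\sigma^p(M)\le \lambda_{0,p}^D(M_i)$ for every $i$. In that case the inequality passes to the limit whenever the limit exists, and otherwise to $\liminf$. This is the form I would actually prove. For nested domains, domain monotonicity of Dirichlet eigenvalues (via the Rayleigh quotient over $H^1_0$) shows that $\lambda_{0,p}^D(M_i)$ is non-increasing and bounded below by $0$. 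Hence the limit exists.

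\textbf{Key step.} Fix $i$ and $\delta>0$. By the min-max principle on $M_i$ with Dirichlet boundary conditions, there is $\omega\in H^1_0(\Lambda^pT^*M_i)$ with
\[
\frac{\int_{M_i}\bigl(|d\omega|^2+|d^*\omega|^2\bigr)}{\int_{M_i}|\omega|^2}\le \lambda_{0,p}^D(M_i)+\delta .
\]
Smooth compactly supported forms are dense in $H^1_0$, and the quadratic form is continuous in the $H^1$ norm. So we may take $\omega$ smooth with compact support in the interior of $M_i$, at the cost of another $\delta$. Extending $\omega$ by zero gives an element of $\Lambda^p_0(M)$. Its numerator and denominator on $M$ coincide with those on $M_i$, because $d$ and $d^*$ are local operators. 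Hence
\[
\sigma^p(M)\le \lambda_{0,p}^D(M_i)+2\delta .
\]
Letting $\delta\to0$ and then $i\to\infty$ gives the claim.

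The hypothesis $\bigcup_i M_i=M$ is not needed for the inequality itself; it only matters for the sharpness that is used later. The only delicate point is the density and approximation step. I expect it to be routine, since $H^1_0$ is by definition the closure of $C_c^\infty$ forms in the norm $\|\omega\|_{L^2}+\|\nabla\omega\|_{L^2}$, and $|d\omega|,|d^*\omega|\le C|\nabla\omega|$ pointwise. One should also check that the paper's $\lambda^D_{0,p}$ means the bottom of the Dirichlet spectrum on $p$-forms. If instead it means the first positive eigenvalue, the same test-form argument applies, using any form in the corresponding eigenspace.
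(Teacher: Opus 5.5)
Your proposal is correct and is essentially the paper's argument: the paper's one-line proof rests on the inclusion $\Lambda^p_0(M_i)\subseteq\Lambda^p_0(M_{i+1})$, together with the implicit inclusion into $\Lambda^p_0(M)$. That inclusion is exactly your extension-by-zero step applied to compactly supported test forms. Your version additionally supplies the density approximation from $H^1_0$ to smooth compactly supported forms, and it makes explicit the nestedness of the $M_i$, which the paper tacitly assumes and which is needed for the limit to exist.
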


\begin{proof}
    Since $\Lambda^p_0(M_i)\subseteq \Lambda^p_0(M_{i+1})$, the lemma follows.
\end{proof}

Savo \cite{savo09}  obtained the following bound on the supremum of $L^2$ spectrum on $p$ forms on the hyperbolic space $\mathbb{H}^n$.
\begin{equation*}
\sigma^p(\mathbb{H}^n)=
\begin{cases}
0, & \text{if $p\le (n+1)/2$, }\\
\frac{(2p-n-1)^2}{4}, & \text{if $p>(n+1)/2$}.
\end{cases}
\end{equation*}
Savo also showed that if  $M$ admits a non-trivial harmonic $p$-form then $\sigma_p(M)=0$ \cite{savo09}. We now derive a general upper bound for the supremum of the $L^2$ spectrum on complete non-compact manifolds whose Ricci curvature admits a uniform lower bound.
 
\begin{theorem}
   
   Let $(M,g)$ be a complete non-compact Riemannian $n$-manifold such that $\mathrm{Ric}(M)\ge (n-1)\xi.$ Then there exists a positive constant $r_H$ such that for $p$ forms
   \[\sigma^p(M)\le 2^{2p+1} \lambda_0^D(B_\xi(r_H)).\]   
   \end{theorem}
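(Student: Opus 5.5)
The plan is to combine the exhaustion estimate of Lemma \ref{lem:exhausting_M_dirichlet} with the local Dirichlet bound of Lemma \ref{lem:upper bound dirichlet eigenvalue}.

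First, the constant $r_H$. Since the statement assumes only $\mathrm{Ric}\ge (n-1)\xi$, I would invoke Theorem \ref{thm:harmonic_radius}. That theorem is stated for complete manifolds in $\mathcal{M}(n,\xi,r_0)$, so it gives a positive $r_H$ once an injectivity radius lower bound is available. Without such a bound, I would use the pointwise version instead. At a fixed base point $x_0$ the harmonic radius is positive, since on any smooth manifold the harmonic radius is positive at each point. We then let $r_H$ be any positive number not exceeding the harmonic radius at $x_0$, so that on $B(x_0,r_H)$ harmonic coordinates satisfy conclusions (1)--(2) of Theorem \ref{thm:harmonic_radius}. This is the step I expect to be the main obstacle. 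Existence of $r_H$ "for the manifold" is only meaningful either under the injectivity hypothesis of $\mathcal{M}(n,\xi,r_0)$ or if $r_H$ is allowed to depend on $(M,g)$. I would adopt the latter reading, so the content of the theorem is the explicit bound in terms of $r_H$.

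Second, the local estimate. On $B=B(x_0,\varepsilon)$ with $\varepsilon\le r_H$, the proof of Lemma \ref{lem:upper bound dirichlet eigenvalue} uses only three ingredients: harmonic coordinates on $B$, the resulting identity $d^*\omega_0=0$ for $\omega_0=dy^1\wedge\cdots\wedge dy^p$, and the metric bounds $\tfrac12\delta_{ij}\le g_{ij}\le 2\delta_{ij}$. None of these uses compactness of $M$. Testing with $\omega=f\omega_0$ for $f\in H^1_0(B)$ therefore gives
\[
\lambda^D_{0,p}(B(x_0,\varepsilon))\le 2^{2p+1}\lambda^D_0(B(x_0,\varepsilon)).
\]
Cheng's comparison for Dirichlet eigenvalues of balls under $\mathrm{Ric}\ge (n-1)\xi$ is local and holds on complete manifolds. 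Applying it yields $\lambda^D_{0,p}(B(x_0,\varepsilon))\le 2^{2p+1}\lambda^D_0(B_\xi(\varepsilon))$, and in particular this holds for $\varepsilon=r_H$.

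Third, the comparison with $\sigma^p(M)$. Take the exhaustion $M_i=\overline{B(x_0,r_H+i)}$, whose members are compact by completeness (Hopf--Rinow) and whose union is $M$. By domain monotonicity, since $\Lambda^p_0(B(x_0,r_H))\subseteq\Lambda^p_0(M_i)$, we have $\lambda^D_{0,p}(M_i)\le\lambda^D_{0,p}(B(x_0,r_H))$ for every $i$. Lemma \ref{lem:exhausting_M_dirichlet} then gives
\[
\sigma^p(M)\le \lim_i\lambda^D_{0,p}(M_i)\le 2^{2p+1}\lambda^D_0(B_\xi(r_H)).
\]
Equivalently, and more directly, any compactly supported test form on $B(x_0,r_H)$ is admissible in the infimum defining $\sigma^p(M)$, so the bound also follows straight from the second step.
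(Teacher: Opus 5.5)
Your proof is correct. Its skeleton matches the paper's: a local Dirichlet bound from Lemma \ref{lem:upper bound dirichlet eigenvalue}, then passage to $\sigma^p(M)$ by exhaustion and domain monotonicity. The difference is where $r_H$ comes from and how the local bound is transferred.

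\begin{itemize}
\item \textbf{The paper's route.} It fixes a lower bound $r_0$ on the injectivity radius and takes the uniform harmonic radius of Theorem \ref{thm:harmonic_radius}. It then applies Theorem \ref{thm:main_theorem} to the compact balls $\overline{B(x,j')}$ with $j'\ge 2r_H$.
\item \textbf{Your route.} You take the harmonic radius at a single base point, which is positive on any smooth manifold. You then apply Lemma \ref{lem:upper bound dirichlet eigenvalue} and Cheng's ball comparison directly to $B(x_0,r_H)$.
\end{itemize}

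What your version buys is fidelity to the stated hypotheses.
\begin{itemize}
\item A complete non-compact manifold with $\mathrm{Ric}\ge (n-1)\xi$ need not have injectivity radius bounded below; a hyperbolic cusp is an example. So the paper's step ``fixing a lower bound $r_0$'' is an assumption not present in the statement.
\item Theorem \ref{thm:main_theorem} is stated for closed manifolds. Its use on a ball with Dirichlet boundary has to be read as exactly the domain-monotonicity argument you write out.
\item You also correctly note that the exhaustion is superfluous. Forms compactly supported in one ball are already admissible for $\sigma^p(M)$, using density of smooth compactly supported forms in $H^1_0$.
\item A minor point: take $r_H$ strictly below the pointwise harmonic radius, so that harmonic coordinates with the metric bounds exist on all of $B(x_0,r_H)$.
\end{itemize}

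The paper's route, when an injectivity radius bound $r_0$ is actually available, buys uniformity. In that case $r_H$ depends only on $(n,\xi,r_0)$ rather than on $(M,g)$ and the chosen base point.
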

  \begin{proof}

 Let $x\in M$ and $\{B_i=\overline{B\left(x,i\right)}\}_{i\in\mathbb{N}}$ be an exhaustion of $M$ by compact domains. Fixing a lower bound $r_0$ of the injectivity radius, from \cite{Hebey}, the harmonic radius is bounded below by a constant $r_H$.  We choose $j\in\mathbb{N}$ such that  $2r_H\le j.$      Then for all $j'>j$, from Theorem \ref{thm:main_theorem}, 
\[\lambda_{0,p}^D(B_j')\le 2^{2p+1} \lambda_0^D(B_\xi(r_H)).\]      
     Since $\sigma^p(M)\le \lim_{i\rightarrow \infty}\lambda_{0,p}^D(B_i)$, \[\sigma^p(M)\le 2^{2p+1} \lambda_0^D(B_\xi(r_H)).\]

  \end{proof}

\section*{Acknowledgement}
The authors are grateful to Professor John Lott and Professor Shouhei Honda for their helpful comments and suggestions. Anusha Bhattacharya is supported by the Council of Scientific and Industrial Research (CSIR).
\printbibliography
\end{document}